\newtheorem{proposition}{Proposition}
\newtheorem{lemma}{Lemma}
\newtheorem{theorem}{Theorem}
\newtheorem{corollary}{Corollary}
\newtheorem*{proposition*}{Proposition}
\newtheorem*{lemma*}{Lemma}
\newtheorem*{theorem*}{Theorem}
\newtheorem*{corollary*}{Corollary}
\newtheorem*{definition*}{Definition}
\newtheorem*{remark*}{Remark}
\newtheorem*{example*}{Example}
\begin{document}

\begin{center}
{\LARGE On modules with few minimax cocentralizers}

Leonid~A.~Kurdachenko$^1$, Igor~Ya.~Subbotin$^2$ and Vasiliy A.~Chupordya$^1$

$^1$ Department of Algebra and Geometry, Dnipropetrovsk National University,
Gagarin prospect 72, Dnepropetrovsk 10, 49010, Ukraine

$^2$ Mathematics Department, National University, 5245 Pacific Concourse
Drive, Los Angeles, CA 90045, USA
\end{center}

\textbf{Abstract:} Let $R$ be a ring and $G$ a group. An $R$-module $A$ is
said to be minimax if $A$ includes an noetherian submodule $B$ such that $A/B
$ is artinian. The authors study a $\mathbb{Z}G$-module A such that $A/C_A(H)
$ is minimax (as a $\mathbb{Z}$-module) for every proper not finitely
generated subgroup $H$.

\textbf{Keywords:} minimax modules, minimax cozentralizers, finitary modules
over group rings, minimax-antifinitary $RG$-module, generalized radical
groups.

Classification: 20C07, 20F19.

\section{Introduction}

The modules over group rings $RG$ are classical objects of study with well
established links to various areas of algebra. The case when $G$ is a finite
group has been studying in sufficient details for a long time. For the case
when $G$ is an infinite group, the situation is different. The investigation
of modules over polycyclic-by-finite groups was initiated in the classical
works of P.~Hall~\cite{HP1954, HP1959}. Nowadays, the theory of modules over
polycyclic-by-finite groups is highly developed and rich on interesting
results. This was largely due to the fact that a group ring $RG$ of a
polycyclic-by-finite group $G$ over a noetherian ring $R$ is also
noetherian. The group rings over some other groups (even over well-studied
groups, as for instance, the Chernikov groups) do not always have such good
properties as, for example, being noetherian. Therefore, their investigation
requires some different approaches and restrictions. The classical
finiteness conditions are widely popular such kind of restrictions. The very
first restrictions here were those that came from ring theory, namely the
conditions "to be noetherian" and "to be artinian". Noetherian and artinian
modules over group rings are also very well investigated. Many aspects of
the theory of artinian modules over group rings are well reflected in the
book~\cite{KOS2007}. Lately the so-called finitary approach is under
intensive development. This is mainly due to the progress which its
applications have found in the theory of infinite dimensional linear groups.

Let $R$ be a ring, $G$ a group and $A$ an $RG$-module. For a subgroup $H$ of 
$G$ we consider the $R$-submodule $C_A(H)$. Then $H$ acts on $A/C_A(H)$. The 
$R$-factor-module $A/C_A(H)$ is called the \textit{cocentralizer of $H$ in $A
$}. The factor-group $H/C_H(A/C_A(H))$ is isomorphic to a subgroup of
automorphisms group of an $R$-module $A/C_A(H)$. If $x$ is an element of $%
C_H(A/C_A(H))$, then $x$ acts trivially on factors of the series $%
\langle0\rangle \leq C_A(H) \leq A.$ It follows that $C_H(A/C_A(H))$ is
abelian. This shows that the structure of $H$ to a greater extent is defined
by the structure of $C_H(A/C_A(H))$, and hence by the structure of the
automorphisms group of the $R$-module $A/C_A(H)$. Let $\mathfrak{M}$ be a
class of $R$-modules. We say that $A$ is \textit{$\mathfrak{M}$-finitary
module over $RG$} if $A/C_A(x) \in \mathfrak{M}$ for each element $x \in G$.
If $R$ is a field, $C_G(A) = \langle1\rangle$, and $\mathfrak{M}$ is the
class of all finite dimensional vector spaces over $R$, then we come to the
finitary linear groups. The theory of finitary linear groups is quite well
developed (see, for example, the survey~\cite{PR1995}). B.A.F.~Wehrfritz
began considering the cases when $\mathfrak{M}$ is the class of finite $R$%
-modules~\cite{WB2002[1], WB2002[3], WB2002[4], WB2004[1]}, when $\mathfrak{M%
}$ is the class of noetherian $R$-modules~\cite{WB2002[2]}, when $\mathfrak{M%
}$ is the class of artinian $R$-modules~\cite{WB2002[4], WB2003, WB2004[1],
WB2004[2], WB2005}. The artinian-finitary modules have been considered also
in the paper~\cite{KSC2007}. The artinian and noetherian modules can be
united into the following type of modules. An $R$-module $A$ is said to be 
\textit{minimax} if $A$ has a finite series of submodules, whose factors are
either noetherian or artinian. It is not hard to show that if $R$ is an
integral domain, then every minimax $R$-module $A$ includes a noetherian
submodule $B$ such that $A/B$ is artinian. The first natural case here is
the case when $R=\mathbb{Z}$ is the ring of all integers. B.A.F.~Wehrfritz
has began the study of noetherian-finitary and artinian-finitary modules
with separate consideration of this case. This case is of particular
importance in applications, for instance, it is very important in the theory
of generalized soluble groups.

Let $G$ be a group, $A$ an $RG$-module, and $\mathfrak{M}$ a class of $R$%
-modules. Put

\begin{equation*}
\mathcal{C}_\mathfrak{M}(G) = \{ H \ | \ H \mbox{ is a subgroup of } G %
\mbox{ such that } A/C_A(H) \in \mathfrak{M} \}.
\end{equation*}

If $A$ is an $\mathfrak{M}$-finitary module, then $\mathcal{C}_\mathfrak{M}%
(G)$ contains every cyclic subgroup (moreover, every finitely generated
subgroup whenever $\mathfrak{M}$ satisfies some natural restrictions). It is
clear that the structure of $G$ depends significantly on which important
subfamilies of the family $\Lambda(G)$ of all proper subgroups of $G$
include $\mathcal{C}_\mathfrak{M}(G)$. Therefore it is interesting to
consider the cases when the family $\mathcal{C}_\mathfrak{M}(G)$ is large.
The case, when $\mathcal{C}_\mathfrak{M}(G)$ is a family of all proper
subgroups was discussed in another paper by the same authors. In almost all
groups (with exception of noetherian groups), the family of subgroups which
is not finitely generated is much larger than the family of finitely
generated subgroups. It is therefore interesting to consider the case, which
is dual to the case of an $\mathfrak{M}$-finitary module.

Let $R$ be a ring, $G$ be a group and $A$ an $RG$-module. We say that $A$ is 
\textit{minimax-antifinitary $RG$-module} if the factor-module $A/C_A(H)$ is
minimax as an R-module for each not finitely generated proper subgroup $H$
and the $R$-module $A/C_A(G)$ is not minimax.

This current work is devoted to the study of a minimax-antifinitary $\mathbb{%
Z}G$-modules where $G$ belongs to the following very large class of groups.

A group $G$ is called \textit{generalized radical} if $G$ has an ascending
series whose factors are locally nilpotent or locally finite. Hence a
generalized radical group $G$ either has an ascendant locally nilpotent
subgroup or an ascendant locally finite subgroup. In the first case, the
locally nilpotent radical $\mathbf{Lnr}(G)$ of $G$ is non-identity. In the
second case, it is not hard to see that $G$ includes a non-identity normal
locally finite subgroup. Clearly in every group G the subgroup $\mathbf{Lfr}%
(G)$ generated by all normal locally finite subgroups is the largest normal
locally finite subgroup (the \textit{locally finite radical}). Thus every
generalized radical group has an ascending series of normal subgroups with
locally nilpotent or locally finite factors.

The study breaks down naturally into the following cases. The first case is
the case when $G = \mathbf{Coc}_{\mathbb{Z}-mmx(G)} = \{ x \ | \ A/C_A(x) %
\mbox{ is a minimax }\mathbb{Z}-\mbox{module} \}$. In this case, every
proper subgroup of $G$ has a minimax cocentralizer. As we noted above, this
case was considered separately in another paper. The second case is the case
when $G \neq \mathbf{Coc}_{\mathbb{Z}-mmx}(G)$ and the group $G$ is not
finitely generated. The third case is the case when $G \neq \mathbf{Coc}_{%
\mathbb{Z}-mmx}(G)$ and the group $G$ is finitely generated. The current
article is dedicated to the second case. Its main result is the following

\begin{theorem}
Let $G$ be a locally generalized radical group, $A$ a minimax-antifinitary $%
\mathbb{Z}G$-module, and $D = \mathbf{Coc}_{\mathbb{Z}-mmx}(G)$. Suppose
that $G$ is not finitely generated, $G \neq D$ and $C_G(A) = \langle1\rangle$%
. Then $G$ is a group of one of the following types:

\begin{enumerate}
\item $G$ is a quasicyclic $q$-group for some prime $q$. 

\item $G = Q \times \langle g \rangle$ where $Q$ is a quasicyclic $q$%
-subgroup, $g$ is a $p$-element and $g^p \in D$, $p$, $q$ are prime (not
necessary different). 

\item $G$ includes a normal divisible Chernikov $q$-subgroup $Q$, such that $%
G = Q\langle g \rangle$ where $g$ is a $p$-element, $p$, $q$ are prime (not
necessary different). Moreover, $G$ satisfies the following conditions: 

\begin{enumerate}
\item \label{3a} $g^p \in D$; 

\item \label{3b} $Q$ is $G$-quasifinite; 

\item \label{3c} If $q = p$, then $Q$ has special rank $p^{m-1}(p-1)$ where $%
p^m =| \langle g\rangle/C_{\langle g \rangle}(Q)|$; 

\item \label{3d} if $q \neq p$, then $Q$ has special rank $\mathbf{o}(q, p^m)
$ where again $p^m = | \langle g \rangle/C_{\langle g \rangle}(Q)|$ and $%
\mathbf{o}(q, p^m)$ is the order of $q$ modulo $p^m$. 
\end{enumerate}
\end{enumerate}

Furthermore, for the types 2, 3 $A(\omega\mathbb{Z}D)$ is a Chernikov
subgroup and $\Pi(A(\omega\mathbb{Z}D)) \subseteq \Pi(D)$.
\end{theorem}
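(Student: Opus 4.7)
The plan is to identify a ``bad'' element $g \in G \setminus D$ and to exploit the very strong restriction this places on subgroups of $G$ containing it. Pick $g \in G \setminus D$, so $A/C_A(g)$ is not minimax. The crucial observation is that every proper subgroup $H$ of $G$ with $g \in H$ must be finitely generated: if $H$ were not, the defining hypothesis would give $A/C_A(H)$ minimax, and since $C_A(H) \leq C_A(g)$ makes $A/C_A(g)$ a quotient of $A/C_A(H)$, we would get $A/C_A(g)$ minimax, hence $g \in D$, a contradiction. In particular $\langle g \rangle$ is proper and finitely generated, and for every $x \in G$ the subgroup $\langle g, x \rangle$ is a proper finitely generated subgroup of $G$ (it cannot equal $G$ since $G$ is not finitely generated).

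I would then split into cases according to whether $G$ possesses any proper non-finitely-generated subgroup. If not, every proper subgroup of $G$ is finitely generated; combined with the local generalized radical hypothesis and the non-triviality of $\mathbf{Lnr}(G)$ or $\mathbf{Lfr}(G)$, standard structure theory forces $G$ to be a quasicyclic $q$-group, giving type~1. Otherwise, pick a proper non-finitely-generated subgroup $N$; by the opening observation $N \subseteq D$ and $g \notin N$. Using the generalized radical structure of $N$, I extract a non-trivial normal divisible abelian torsion subgroup $Q$ of $G$ contained in $D$; the assumption $C_G(A) = \langle 1 \rangle$ together with the minimax condition applied to the different primary components forces $Q$ to be a $q$-group for a single prime $q$ (otherwise the sum of the remaining primary components is a proper non-finitely-generated subgroup whose cocentralizer structure contradicts $C_G(A) = \langle 1 \rangle$).

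Since $Q \langle g \rangle$ contains both the non-finitely-generated $Q$ and the element $g$, it cannot be a proper finitely generated subgroup of $G$; hence $G = Q\langle g\rangle$. Because $\langle g^p, Q \rangle$ does not contain $g$ (for an appropriate prime $p$ dividing the order of $g$) but does contain the non-finitely-generated $Q$, it is a proper non-finitely-generated subgroup and therefore lies in $D$; this yields $g^p \in D$ and, by a similar argument applied to each prime dividing the order of $g$, forces $g$ to be a $p$-element. The dichotomy between types~2 and~3 is then whether the extension $G = Q \langle g\rangle$ is direct; in type~3 the $G$-quasifiniteness of $Q$ is immediate, since any proper $G$-invariant $Q_0 \leq Q$ paired with $\langle g\rangle$ yields a proper subgroup $Q_0 \langle g\rangle$ of $G$ containing $g$, which must therefore be finitely generated, forcing $Q_0$ to be finite.

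The main obstacle is establishing the precise special ranks in items \ref{3c} and \ref{3d}. These reflect the irreducible $\mathbb{F}_q$-representations of the finite cyclic $p$-group $\langle g\rangle / C_{\langle g\rangle}(Q)$ of order $p^m$ acting faithfully on $Q$: when $q = p$ the dimension is $p^{m-1}(p-1)$ via the structure of $\mathbb{F}_p[t]/(t^{p^m}-1)$, and when $q \neq p$ it is $\mathbf{o}(q,p^m)$ via the degree of the relevant cyclotomic factor of $t^{p^m}-1$ over $\mathbb{F}_q$; translating these dimensions into the special rank of $Q$ by means of $G$-quasifiniteness and duality gives the claimed formulas. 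Finally, for types~2 and~3, from $Q \subseteq D$, the minimax character of cocentralizers inside $D$, and the Chernikov structure of $Q$, one concludes that $A(\omega\mathbb{Z}D)$ is Chernikov and that its primary components can only involve primes actually appearing in the order of some element of $D$, which gives $\Pi(A(\omega\mathbb{Z}D)) \subseteq \Pi(D)$.
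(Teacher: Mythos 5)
Your opening observation (every proper subgroup containing $g$ is finitely generated, hence every proper non-finitely-generated subgroup lies in $D$) is exactly the paper's Lemma~\ref{L8}, and your arguments for $G=Q\langle g\rangle$, for $g^p\in D$, and for the $G$-quasifiniteness of $Q$ \emph{once $Q$ is in hand} all match the paper's. The case split and the treatment of the ranks in \ref{3c}--\ref{3d} (which the paper simply outsources to Zaitsev and Hartley) are also reasonable. But there is a genuine gap at the step ``Using the generalized radical structure of $N$, I extract a non-trivial normal divisible abelian torsion subgroup $Q$ of $G$ contained in $D$.'' Nothing in the generalized radical structure of a proper non-finitely-generated subgroup produces a divisible torsion subgroup: a priori $D$ could be free abelian of infinite rank, or a torsion-free rank-one group such as $\mathbb{Z}[1/2]$, or a locally finite group with trivial divisible part. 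Ruling these out is the technical core of the paper and occupies Lemma~\ref{L2}, Corollary~\ref{C3.3} and Propositions~\ref{P2.4}--\ref{P2.5}: one first shows $G$ is soluble and $G/D$ has prime order $p$; then Corollary~\ref{C3.3} shows every normal subgroup of finite index $H$ satisfies $G=H\langle g\rangle$ and $G/H$ is a $p$-group; one then analyses $K=[G,G]$ and $L=[K,K]$, proves the relevant quotients are $\mathfrak{F}$-perfect, kills the torsion-free part of the abelian minimax section by choosing a prime $r\neq p$ among the infinitely many $r$ with $X\neq X^r$ and contradicting Corollary~\ref{C3.3}, and finally collapses the derived series to show the section is a divisible Chernikov $q$-group. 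None of this is present or replaceable by a one-line appeal to the radical structure; your parenthetical reason for $Q$ being a $q$-group for a single prime (``contradicts $C_G(A)=\langle 1\rangle$'') is also not the actual mechanism (the correct one is that a second infinite primary component $Q_1$ would make $Q_1\langle g\rangle$ a proper non-finitely-generated subgroup containing $g$).

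Two smaller but real gaps: you invoke ``each prime dividing the order of $g$'' before establishing that $g$ has finite order at all; the paper must work to exclude $gK$ having infinite order (the argument with two distinct primes $r_1,r_2$ and $\langle g\rangle=\langle g^{r_1}\rangle\langle g^{r_2}\rangle$ in Proposition~\ref{P2.4}) and to prove the relevant quotient is periodic. And for the final assertion, ``$A(\omega\mathbb{Z}D)$ is Chernikov'' does not follow formally from $D$ being Chernikov and $A/C_A(D)$ minimax without an argument; this is the content of Lemma~\ref{L3.6}, which bounds the special rank of $A(\omega\mathbb{Z}L_n)$ along an exhausting chain of finite subgroups and uses the homomorphisms $x\mapsto a(x-1)$ to transfer divisibility and the prime spectrum from $D$ to $A(\omega\mathbb{Z}D)$. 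Your sketch names the right ingredients here but supplies no mechanism.
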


Here $\omega RG$ be the \textit{augmentation ideal} of the group ring $RG$,
the two-sided ideal of $RG$ generated by all elements $g-1$, $g \in G$.

Recall also that an abelian normal subgroup $A$ of a group $G$ is called 
\textit{$G$-quasifinite} if every proper $G$-invariant subgroup of $A$ is
finite. Clearly that in this case either $A$ is a union of its finite $G$%
-invariant subgroups or $A$ includes a finite $G$-invariant subgroup $B$
such that the factor $A/B$ is $G$-chief. At the end of the article, we
provide the examples showing that all the situations that arise in the
theorem can be realized.

\section{Some preliminary results}

Let $R$ be a ring and $\mathfrak{M}$ a class of $R$-modules. Then $\mathfrak{%
M}$ is said to be a \textit{formation} if it satisfies a following
conditions:

F1 if $A \in \mathfrak{M}$ and $B$ is an $R$-submodule of $A$, then $A/B \in 
\mathfrak{M}$;

F2 if $A$ is an $R$-module and $B_1, ..., B_k$ are $R$-submodules of $A$
such that $A/B_j \in \mathfrak{M}$, $1 \leq j \leq k$, then $A/( B_1 \cap
... \cap B_k) \in \mathfrak{M}$.

\begin{lemma}
\label{L1} Let $R$ be a ring, $\mathfrak{M}$ a formation of $R$-modules, $G$
a group and $A$ an $RG$-module.

(i) If $L, H$ are subgroups of $G$ such that $L \leq H$ and $A/C_A(H) \in 
\mathfrak{M}$, then ${A/C_A(L) \in \mathfrak{M}}$.

(ii) If $L, H$ are subgroups of $G$ whose cocentralizers belong to $%
\mathfrak{M}$, then ${A/C_A(\langle H, L \rangle) \in \mathfrak{M}}$.
\end{lemma}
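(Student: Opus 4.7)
The plan is to reduce both parts directly to the formation axioms F1 and F2 by exploiting elementary relations among the centralizer submodules $C_A(L)$, $C_A(H)$, and $C_A(\langle H,L\rangle)$.

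For part (i), I would first observe that $L \leq H$ implies $C_A(H) \leq C_A(L)$: any element of $A$ fixed by every element of $H$ is \emph{a fortiori} fixed by every element of the subgroup $L$. Hence $A/C_A(L)$ is an $R$-module quotient of $A/C_A(H)$ (via the third isomorphism theorem applied to the chain $C_A(H) \leq C_A(L) \leq A$), and property F1 applied to the hypothesis $A/C_A(H) \in \mathfrak{M}$ immediately yields $A/C_A(L) \in \mathfrak{M}$.

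For part (ii), the key step is the identity $C_A(\langle H, L\rangle) = C_A(H) \cap C_A(L)$. The inclusion $\subseteq$ is clear since $H, L \leq \langle H, L\rangle$. For the reverse inclusion, any $a \in C_A(H) \cap C_A(L)$ is fixed by every element of $H$ and every element of $L$, and since fixed points form a submodule stable under all group operations, $a$ is fixed by every word in these generators, that is, by every element of $\langle H, L\rangle$. With this identity in hand, property F2 applied to the two submodules $C_A(H)$ and $C_A(L)$, whose cocentralizers lie in $\mathfrak{M}$ by hypothesis, gives $A/(C_A(H) \cap C_A(L)) \in \mathfrak{M}$, which is exactly $A/C_A(\langle H, L\rangle) \in \mathfrak{M}$.

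I do not foresee any substantive obstacle: both parts are formal consequences of the formation axioms combined with the observation that $C_A(\cdot)$ is order-reversing on subgroups and converts joins of subgroups into intersections of submodules. The only mildly nontrivial point, that centralizing a set of generators suffices to centralize the whole subgroup, is immediate because for any subset $S \subseteq G$ the submodule $C_A(S)$ is automatically invariant under compositions and inverses of elements of $S$.
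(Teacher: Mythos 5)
Your proof is correct and follows essentially the same route as the paper: part (i) via the inclusion $C_A(H) \leq C_A(L)$ and axiom F1, and part (ii) via the identity $C_A(\langle H, L\rangle) = C_A(H) \cap C_A(L)$ and axiom F2. If anything, you are slightly more careful than the paper, which only records the inclusion $C_A(\langle H, L\rangle) \leq C_A(H) \cap C_A(L)$ and leaves the reverse inclusion (the one actually needed to conclude) implicit, whereas you prove the equality explicitly.
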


\begin{proof}
The inclusion $L \leq H$ implies that $C_A(L) \geq C_A(H)$. Since $A/C_A(H) \in \mathfrak{M}$ and $\mathfrak{M}$ is a formation, $A/C_A(L) \in \mathfrak{M}$. Clearly $C_A(\langle H, L\rangle) \leq C_A(H) \cap C_A(L)$. Since $\mathfrak{M}$ is a formation, $A/(C_A(H) \cap C_A(L)) \in \mathfrak{M}$. Then and $A/C_A(\langle H, L \rangle) \in \mathfrak{M}$.
\end{proof}

\begin{lemma}
Let $R$ be a ring, $\mathfrak{M}$ a formation of $R$-modules, $G$ a group
and $A$ an $RG$-module. Then 
\begin{equation*}
\mathbf{Coc}_\mathfrak{M}(G) = \{ x \in G \ | \ A/C_A(x) \in \mathfrak{M} \}
\end{equation*}
is a normal subgroup of $G$.
\end{lemma}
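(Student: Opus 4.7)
The plan is to establish the two subgroup axioms and normality separately. Closure under multiplication and inverses will follow cleanly from Lemma~\ref{L1} together with the observation that $C_A(x) = C_A(x^{-1})$, while normality will come from transporting the centralizer by the $R$-module automorphism of $A$ induced by any $g \in G$.

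For the subgroup verification, I would first note that $C_A(1) = A$, so $1 \in \mathbf{Coc}_\mathfrak{M}(G)$ (the zero module is a quotient of any member of $\mathfrak{M}$ by F1, and $\mathfrak{M}$ is nonempty as soon as $\mathbf{Coc}_\mathfrak{M}(G)$ is; alternatively the trivial module is in $\mathfrak{M}$ by the standing convention on formations). Closure under inverses is immediate from $C_A(x) = C_A(x^{-1})$. For closure under products, take $x, y \in \mathbf{Coc}_\mathfrak{M}(G)$: Lemma~\ref{L1}(ii) applied to $\langle x\rangle$ and $\langle y\rangle$ gives $A/C_A(\langle x, y\rangle) \in \mathfrak{M}$, and Lemma~\ref{L1}(i) applied to $\langle xy\rangle \leq \langle x, y\rangle$ then yields $A/C_A(xy) \in \mathfrak{M}$, i.e.\ $xy \in \mathbf{Coc}_\mathfrak{M}(G)$.

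For normality, fix $x \in \mathbf{Coc}_\mathfrak{M}(G)$ and $g \in G$, and set $y = g^{-1}xg$. A one-line manipulation of the defining equation for $C_A(y)$ shows that $C_A(y)$ is precisely the image of $C_A(x)$ under the $R$-module automorphism of $A$ given by the action of $g$; hence that same automorphism descends to an $R$-module isomorphism $A/C_A(x) \cong A/C_A(y)$, so $y \in \mathbf{Coc}_\mathfrak{M}(G)$. I do not expect any genuine obstacle: the statement is a straightforward corollary of Lemma~\ref{L1} combined with the standard conjugation identification of centralizers, the only mildly implicit step being the closure of a formation under module isomorphism, which is automatic for an abstract class of $R$-modules.
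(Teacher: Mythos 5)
Your proof is correct and follows essentially the same route as the paper: the subgroup axioms are extracted from Lemma~\ref{L1} exactly as the paper does implicitly, and normality is obtained from the identity $C_A(x^g)=C_A(x)g$ together with the $R$-linearity of the action of $g$, which is the paper's argument verbatim. The only difference is that you spell out the identity/inverse/product checks that the paper compresses into ``By Lemma~\ref{L1}''.
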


\begin{proof}
By Lemma \ref{L1}, $\mathbf{Coc}_\mathfrak{M}(G)$ is a subgroup of $G$. Let $x \in \mathbf{Coc}_\mathfrak{M}(G)$, $g \in G$. Then $C_A(x^g) = C_A(x)g$. Since the mapping $a \mapsto  ag$, $a \in A$, is $R$-linear,
$$A/C_A(x) \cong_R Ag/C_A(x)g = A/C_A(x)g = A/C_A(x^g),$$
which shows that $A/C_A(x^g) \in \mathfrak{M}$, and hence $x^g \in \mathbf{Coc}_\mathfrak{M}(G)$.
\end{proof}

Clearly the class of minimax modules over an integral domain $R$ is a
formation, so we obtain

\begin{corollary}
\label{C3} Let $R$ be a ring, $G$ a group and $A$ an $RG$-module.

(i) $L$, $H$ are subgroups of $G$ such that $L \leq H$ and a factor-module $%
A/C_A(H)$ is minimax, then $A/C_A(L)$ is also minimax.

(ii) If $L$, $H$ are subgroups of $G$ whose cocentralizers are minimax, then 
$A/C_A(\langle H, L \rangle)$ is also minimax.
\end{corollary}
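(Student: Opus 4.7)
My plan is to derive the corollary as a direct specialization of Lemma~\ref{L1}, taking $\mathfrak{M}$ to be the class of minimax $R$-modules. The only real work is then the parenthetical remark just before the statement: to check that this class actually satisfies the formation axioms F1 and F2. Once that is in place, assertions (i) and (ii) are nothing more than Lemma~\ref{L1}(i) and (ii) read off with this choice of $\mathfrak{M}$.

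For F1, I would start from the definition: if $A$ is minimax, it has a finite series $0 = A_0 \leq A_1 \leq \cdots \leq A_n = A$ with each $A_i/A_{i-1}$ either noetherian or artinian. Given a submodule $B \leq A$, the induced series $(A_i+B)/B$ in $A/B$ has factors that are epimorphic images of the corresponding $A_i/A_{i-1}$, and both the noetherian and artinian conditions are preserved under epimorphic images; hence $A/B$ is minimax. (The same series, intersected with $B$, would show that submodules of minimax modules are also minimax, a fact I need below.)

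For F2, it suffices by an easy induction to handle $k=2$. The diagonal $R$-homomorphism $A \to A/B_1 \oplus A/B_2$ has kernel exactly $B_1 \cap B_2$, so $A/(B_1 \cap B_2)$ embeds in $A/B_1 \oplus A/B_2$. A finite direct sum of minimax modules is minimax — concatenate the given series — and submodules of minimax modules are minimax by the remark above, so $A/(B_1 \cap B_2) \in \mathfrak{M}$.

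No genuine obstacle arises here; the exercise is entirely formal. The one point worth flagging is that F2 really requires the class $\mathfrak{M}$ to be closed under finite subdirect products, so behind the one-line invocation of the formation property lies the minor observation that the minimax property passes both to submodules and to finite direct sums. Once F1 and F2 are in hand, (i) is Lemma~\ref{L1}(i) applied to $L \leq H$ with $A/C_A(H)$ minimax, and (ii) is Lemma~\ref{L1}(ii) applied to $H$ and $L$ whose cocentralizers are minimax.
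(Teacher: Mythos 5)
Your proposal is correct and follows exactly the paper's route: the paper simply observes that the class of minimax modules is a formation and then invokes Lemma~\ref{L1}, which is precisely your argument with the (routine) verification of F1 and F2 written out. No difference in substance.
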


\begin{corollary}
Let $R$ be a ring, $G$ a group and $A$ an $RG$-module. Then 
\begin{equation*}
\mathbf{Coc}_{R-mmx}(G) = \{ x \in G \ | \ A/C_A(x) \mbox{ is minimax  }\}
\end{equation*}
is a normal subgroup of $G$.
\end{corollary}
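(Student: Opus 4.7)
The plan is to obtain this corollary as an immediate application of the preceding Lemma, which already establishes that $\mathbf{Coc}_\mathfrak{M}(G)$ is a normal subgroup of $G$ whenever $\mathfrak{M}$ is a formation of $R$-modules. The only substantive step is therefore to confirm that the class of minimax $R$-modules satisfies axioms F1 and F2; the remark preceding Corollary~\ref{C3} asserts this to be clear, but I would record the verification explicitly.

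For F1, a minimax module $A$ carries a finite series with factors that are noetherian or artinian; the images of the terms of this series in any quotient $A/B$ yield a finite series whose factors are homomorphic images of the originals, and both the noetherian and artinian properties pass to quotients, so $A/B$ is again minimax. For F2, given submodules $B_1,\ldots,B_k$ with each $A/B_j$ minimax, the canonical injection
\[
A/(B_1\cap\cdots\cap B_k)\hookrightarrow (A/B_1)\oplus\cdots\oplus(A/B_k)
\]
exhibits the left-hand side as a submodule of a finite direct sum of minimax modules. A finite direct sum of minimax modules is minimax (concatenate the individual series), and closure of the minimax property under submodules follows by intersecting a given series with the submodule. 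Hence $A/(B_1\cap\cdots\cap B_k)$ is minimax.

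With F1 and F2 established, applying the preceding Lemma with $\mathfrak{M}$ taken to be the class of minimax $R$-modules yields at once that $\mathbf{Coc}_{R-mmx}(G)$ is a normal subgroup of $G$. No real obstacle arises; the only point worth mild care is the verification of F2, which in turn hinges on the standard fact that both noetherian and artinian modules are closed under submodules and under finite extensions.
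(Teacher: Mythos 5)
Your proposal is correct and follows exactly the paper's route: the paper derives this corollary immediately from the preceding lemma (normality of $\mathbf{Coc}_\mathfrak{M}(G)$ for any formation $\mathfrak{M}$) together with the observation that minimax modules form a formation, which it states without proof and you verify explicitly. Your verification of F1 and F2 is a correct filling-in of that asserted step, so nothing further is needed.
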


A group $G$ is said to be \textit{$\mathfrak{F}$-perfect } if $G$ does not
include proper subgroups of finite index.

\begin{lemma}
\label{L5} Let $G$ be a locally generalized radical group and $A$ be a $%
\mathbb{Z}G$-module. Suppose that $A$ includes a $\mathbb{Z}G$-submodule $B$%
, which is minimax. Then the following assertions hold:

(i) $G/C_G(B)$ is soluble-by-finite.

(ii) If $G/C_G(B)$ is periodic, then it is nilpotent-by-finite.

(iii) If $G/C_G(B)$ is $\mathfrak{F}$-perfect and periodic, then it is
abelian, moreover ${\ [[B, G], G] = \langle0\rangle }.$
\end{lemma}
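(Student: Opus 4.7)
The plan is to quotient by $C_G(B)$ so that $G$ embeds into $\mathrm{Aut}_{\mathbb{Z}}(B)$, and then exploit the characteristic filtration of a minimax $\mathbb{Z}$-module. Let $T$ be the torsion part of $B$: this is Chernikov, with divisible radical $D$ a finite direct sum of Pr\"ufer groups and finite quotient $T/D$; the torsion-free minimax module $B/T$ embeds in some $\mathbb{Q}^n$. The subgroups $T \leq B$ and $D \leq T$ are fully invariant, hence $G$-invariant, and the whole argument proceeds by descent along $\langle 0 \rangle \leq D \leq T \leq B$.

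For (i), the image of $G$ in $\mathrm{Aut}(B/T) \leq GL_n(\mathbb{Q})$ is locally generalized radical (the class is closed under quotients); by Tits's alternative it contains no nonabelian free subgroup, hence is soluble-by-(locally finite), and since locally finite subgroups of $GL_n(\mathbb{Q})$ are finite (Jordan's theorem together with Minkowski's bound on $GL_n(\mathbb{Z})$-torsion), $G/C_G(B/T)$ is soluble-by-finite. The same argument applies to the image of $G$ in $\mathrm{Aut}(D) \hookrightarrow \prod_p GL_{n_p}(\mathbb{Z}_p)$ via the $p$-adic analogue; the image in $\mathrm{Aut}(T/D)$ is finite. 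The intersection of these kernels consists of automorphisms coming from $\mathbb{Z}$-homomorphisms $B/T \to T$ and $T/D \to D$, an abelian group. Assembling these pieces gives $G/C_G(B)$ soluble-by-finite.

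For (ii), assume $G$ is periodic. Periodic subgroups of $GL_n(\mathbb{Q})$ are finite (finitely generated periodic subgroups preserve a lattice, Minkowski bounds torsion in $GL_n(\mathbb{Z})$, and Jordan's theorem gives a uniform bound), and periodic subgroups of $GL_n(\mathbb{Z}_p)$ are finite because the principal congruence subgroup of level $p$ (of level $4$ when $p = 2$) is torsion-free of finite index. Combined with the finiteness of $\mathrm{Aut}(T/D)$ and of $\mathrm{Hom}_{\mathbb{Z}}(T/D, D)$, this shows that the image of $G$ in $\mathrm{Aut}(T)$ is finite. The only potentially infinite residue is the kernel of the action on both $T$ and $B/T$, which lies in $\mathrm{Hom}_{\mathbb{Z}}(B/T, T)$ and is therefore abelian. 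Hence $G/C_G(B)$ is abelian-by-finite, in particular nilpotent-by-finite.

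For (iii), add the hypothesis that $G$ is $\mathfrak{F}$-perfect, which rules out proper finite quotients. Iterating the finite-quotient observations from (ii), $G$ centralises $B/T$, $T/D$, and $D$ in turn, and hence also $T$ itself (the residual obstruction being the finite group $\mathrm{Hom}_{\mathbb{Z}}(T/D, D)$). This gives $[T, G] = \langle 0 \rangle$ and $[B, G] \subseteq T$, whence $[[B, G], G] = \langle 0 \rangle$. The identity $(g-1)(h-1)b - (h-1)(g-1)b = ghb - hgb$, valid for all $g, h \in G$ and $b \in B$, then forces every commutator $[g, h]$ to act trivially on $B$, so $G = G/C_G(B)$ is abelian. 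The main obstacle will be locating and properly citing the structural results on linear groups: the reduction of a locally generalized radical linear group in characteristic zero (or over $\mathbb{Z}_p$) to a soluble-by-finite group, and the finiteness of periodic subgroups of $GL_n(\mathbb{Q})$ and of $GL_n(\mathbb{Z}_p)$; once these are in hand, the remainder is bookkeeping along the filtration, with $\mathfrak{F}$-perfectness consuming each finite quotient that arises.
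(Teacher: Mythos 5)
Your argument is essentially the paper's own proof: the same $G$-invariant filtration $\langle 0\rangle \le D \le T \le B$ (divisible part, torsion part, torsion-free minimax quotient), the same linearization of each layer via Tits's alternative over $\mathbb{Q}$ and over the $p$-adics together with the finiteness of periodic linear groups there, and the same use of $\mathfrak{F}$-perfectness to kill each finite quotient in (iii); your substitutes (congruence subgroups of $GL_n(\mathbb{Z}_p)$, Jordan--Minkowski, and the identity $(g-1)(h-1)-(h-1)(g-1)=gh-hg$ in place of the paper's citations of Wehrfritz, Charin and Kaloujnin) are all sound. One small correction to part (i): the stabilizer of the \emph{three}-term series is only nilpotent of class at most two (this is exactly Kaloujnin's theorem, which the paper invokes), not abelian, since the two maps $B/T\to T$ and $T/D\to D$ can compose nontrivially; this does not affect your conclusions, as nilpotent suffices for soluble-by-finite in (i), and in (ii) and (iii) you correctly pass to the two-term series $\langle 0\rangle\le T\le B$, whose stabilizer really is abelian.
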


\begin{proof}

Without loss of generality we can suppose that $C_G(B) = \langle1\rangle$. Being minimax, $B$ has a series of $G$-invariant subgroups $\langle0\rangle \leq  D \leq K \leq B$  where $D$ is a divisible Chernikov subgroup, $K/D$ is finite and $B/K$ is torsion-free and has finite $\mathbb{Z}$-rank.  More exactly, $D = D_1 \oplus ... \oplus D_n$ where $D_j$ is a Sylow $p_j$-subgroup of $D$, $1 \leq j \leq n$. Clearly every subgroup $D_j$ is  $G$-invariant, $1 \leq j \leq n$. Let $q = p_j$. The factor-group $G/C_G(D_j)$  is isomorphic to the subgroup of $\mathbf{GL}_m(\mathbb{Z}_{q^\infty})$  where $\mathbb{Z}_{q^\infty}$ is the ring of integer $q$-adic numbers and $m$ satisfies $q^m = | \Omega_1(D_j)|$. Let $F$ be a field of fractions of  $\mathbb{Z}_{q^\infty}$, then $G/C_G(D_j)$ is isomorphic to a subgroup of $\mathbf{GL}_m(F)$. Note that $\mathbf{char}(F) = 0$. Being locally generalized radical, $G/C_G(D_j)$ does not include the non-cyclic free subgroup, thus application of Tits's theorem (see, for example,~\cite[Corollary 10.17]{WB1973}) shows that $G/C_G(D_j)$ is soluble-by-finite. If $G$ is periodic, then $G/C_G(D_j)$ is finite (see, for example,~\cite[Theorem 9.33]{WB1973}). It is valid for each $j$, $1 \leq j \leq n$. We have $C_G(D) = \bigcap_{1 \leq j \leq n}C_G(D_j)$. Therefore using Remak's theorem we obtain the imbedding
$$G/C_G(D) \hookrightarrow \mathbf{Dr}_{1 \leq j \leq n}G/C_G(D_j),$$
which shows that $G/C_G(D)$ is also soluble-by-finite (respectively finite). Since $K/D$ is finite, $G/C_G(K/D)$ is finite. Finally, $G/C_G(B/K)$ is isomorphic to a subgroup of $\mathbf{GL}_\mathbf{r}(\mathbb{Q})$, where $\mathbf{r} = \mathbf{r}_\mathbb{Z}(B/K)$. Using again the fact that  $G/C_G(A/K)$ does not include the non-cyclic free subgroup and Tits's theorem (respectively Theorem 9.33 of the book~\cite{WB1973}), we obtain that  $G/C_G(B/K)$ is soluble-by-finite (respectively finite). Put
$$Z = C_G(D) \cap C_G(K/D) \cap C_G(B/K).$$
Then  $G/Z$ is embedded in $G/C_G(D) \times G/C_G(K/D) \times G/C_G(B/K)$, in particular, $G/Z$ is soluble-by-finite (respectively finite). If $x \in Z$, then $x$ acts trivially in every factors of the series  $\langle0\rangle \leq D \leq K \leq A$. By Kaloujnin's theorem~\cite{KL1953} $Z$ is nilpotent. It follows that $G$ is soluble-by-finite (respectively nilpotent-by-finite).

Suppose now that $G$ is an $\mathfrak{F}$-perfect group. Again consider the series of $G$-invariant subgroups $\langle0\rangle \leq K \leq B$. Being abelian and Chernikov, $K$ is a union of the ascending series

$$\langle0\rangle = K_0 \leq K_1 \leq  ... \leq K_n \leq K_{n + 1} \leq ... $$
of $G$-invariant finite subgroups $K_n$, $n \in \mathbb{N}$. Then the factor-group $G/C_G(K_n)$ is finite, $n \in \mathbb{N}$. Since $G$ is  $F$-perfect, $G = C_G(K_n)$ for each $n \in \mathbb{N}$. The equation $K = \bigcup_{n \in \mathbb{N}} K_n$ implies that $G = C_G(K)$. By proved above,  $G/C_G(B/K)$  is soluble-by-finite, and being $\mathfrak{F}$-perfect, is soluble. Then $G/C_G(B/K)$ includes normal subgroups $U$, $V$  such that  $C_G(B/K) \leq U \leq V$, $U/C_G(B/K)$  is isomorphic to a subgroup of $\mathbf{UT}_\mathbf{r}(\mathbb{Q})$, $V/U$ includes a free abelian subgroup of finite index~\cite[Theorem 2]{CVS1954}. Since $G/C_G(B/K)$ is $\mathfrak{F}$-perfect, it follows that $G/C_G(B/K)$ is torsion-free. Being periodic, $G/C_G(B/K)$  must be identity. In other words, $G = C_G(B/K)$. Hence $G$ acts trivially in every factors of a series $\langle0\rangle \leq K \leq A$, so that $[[B, G], G] = \langle0\rangle$, and using again Kaloujnin's theorem~\cite{KL1953}, we obtain that $G$ is abelian.
\end{proof}

\begin{corollary}
\label{C6} Let $G$ be a group and $A$ a $\mathbb{Z}G$-module. If the
factor-module $A/C_A(G)$ is minimax as a $\mathbb{Z}$-module, then every
locally generalized radical subgroup of $G/C_G(A)$ is soluble-by-finite, and
every periodic subgroup of $G/C_G(A)$ is nilpotent-by-finite.
\end{corollary}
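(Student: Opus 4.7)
The plan is to reduce the corollary to Lemma~\ref{L5} by passing to the faithful quotient, and then to bridge from the quotient back to the full subgroup using Kaloujnin stability on a suitable series in $A$.

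First I pass to $\bar G := G/C_G(A)$, so that $\bar G$ acts faithfully on $A$ and $A/C_A(\bar G) = A/C_A(G)$ is minimax over $\mathbb{Z}$ by hypothesis. Fix a locally generalized radical subgroup $\bar H \leq \bar G$ and view $M := A/C_A(G)$ as a $\mathbb{Z}\bar H$-module; it remains minimax. Applying Lemma~\ref{L5} with $\bar H$ in place of ``$G$'' and with $M$ playing the role of both the ambient module and the minimax submodule ``$B$'' yields that $\bar H/N$ is soluble-by-finite, where $N := C_{\bar H}(M)$; if in addition $\bar H$ is periodic, part (ii) of that lemma upgrades this to nilpotent-by-finite.

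To push the conclusion from $\bar H/N$ down to $\bar H$ itself, I observe that $N$ stabilises the two-term series $0 \leq C_A(G) \leq A$ in $A$: every element of $\bar G$ fixes $C_A(G)$ by definition, while every element of $N$ acts trivially on $A/C_A(G) = M$ by definition of $N$. Since $\bar H$ acts faithfully on $A$, a direct computation with the additive cocycles $\delta_n \colon A \to C_A(G)$ (equivalently, Kaloujnin's theorem for a length-$2$ series) forces $N$ to be abelian. Combined with $\bar H/N$ being soluble-by-finite this gives $\bar H$ soluble-by-finite, which is the first assertion.

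The delicate step is the periodic case, for ``abelian-by-(nilpotent-by-finite)'' is not in general nilpotent-by-finite (witness $\mathbb{Z}^2 \rtimes \mathbb{Z}$); so I must look inside the proof of Lemma~\ref{L5} rather than merely quote its statement. That proof produces on $M$ a $\bar H$-invariant series $0 \leq D \leq K \leq M$ with $D$ divisible Chernikov, $K/D$ finite and $M/K$ torsion-free of finite $\mathbb{Z}$-rank, and in the periodic case it shows that each of $\bar H/C_{\bar H}(D)$, $\bar H/C_{\bar H}(K/D)$, $\bar H/C_{\bar H}(M/K)$ is finite. Hence the stability subgroup $Z := C_{\bar H}(D) \cap C_{\bar H}(K/D) \cap C_{\bar H}(M/K)$ has finite index in $\bar H$. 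Lifting the series to $A$ along $\pi \colon A \twoheadrightarrow M$ produces a $\bar H$-invariant length-$4$ series $0 \leq C_A(G) \leq \pi^{-1}(D) \leq \pi^{-1}(K) \leq A$ which is stabilised by $Z$; Kaloujnin's theorem applied to the faithful action of $Z$ on $A$ then makes $Z$ nilpotent, and the finiteness of $\bar H/Z$ finally delivers $\bar H$ nilpotent-by-finite, completing the corollary.
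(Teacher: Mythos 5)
Your proposal is correct and follows essentially the same route as the paper: apply Lemma~\ref{L5} to the minimax module $A/C_A(G)$, observe that the kernel $C_{\bar H}(A/C_A(G))$ stabilises the series $\langle 0\rangle \leq C_A(G) \leq A$ and is therefore abelian, and in the periodic case refine to the four-term series $\langle 0\rangle \leq C_A(G) \leq \pi^{-1}(D) \leq \pi^{-1}(K) \leq A$, whose stability subgroup $Z$ has finite index and is nilpotent by Kaloujnin's theorem. Your write-up is in fact slightly more careful than the paper's, since you explicitly flag (and then circumvent) the point that abelian-by-(nilpotent-by-finite) need not be nilpotent-by-finite, which the paper's proof silently handles by rerunning the series argument.
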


\begin{proof}
Indeed, Lemma~\ref{L5} shows that $G/C_G(A/C_A(G))$  is soluble-by-finite. Every element $x \in C_G(A/C_A(G))$  acts trivially in the factors of the series $\langle0\rangle \leq C_A(G) \leq A$. It follows that $C_G(A/C_A(G))$ is abelian. Suppose now that  $H/C_G(A)$  is a periodic subgroup. Since  $A/C_A(G)$  is minimax, $A$  has a series of  $H$-invariant subgroups $\langle0\rangle \leq C_A(G) \leq D \leq K \leq A$  where $D/C_A(G)$ is divisible Chernikov subgroup, $K/D$ is finite and $A/K$ is torsion-free and has finite $\mathbb{Z}$-rank.  In Lemma~\ref{L5} we have already proved that $G/C_G(D/C_A(G))$, $G/C_G(K/D)$ and $G/C_G(A/K)$ are finite.  Let  $Z = C_G(D/C_A(G)) \cap C_G(K/D) \cap C_G(A/K)$. Then  $G/Z$ is finite. If  $x \in Z$, then $x$ acts trivially in every factors of a series $\langle0\rangle \leq C_A(G) \leq D \leq K \leq A$. By Kaloujnin's theorem~\cite{KL1953} $Z$ is nilpotent.
\end{proof}

Let $G$ be a generalized radical group and let $R_1$ be a normal subgroup of 
$G$, satisfying the following conditions: $R_1$ is radical, $G/R_1$ does not
include the non-identity locally nilpotent normal subgroups. Then $G/R_1$
must include a non-identity normal locally finite subgroup. It follows that
the locally finite radical $R_2/R_1$ is non-identity. If we suppose that $%
G/R_2$ includes a non-identity normal locally finite subgroup $L/R_2$, then $%
L/R_1$ is also locally finite, which contradicts to the choice of $R_2$.
This contradiction shows that $G/R_2$ does not include a non-identity normal
locally finite subgroup, and therefore it must include a non-identity normal
locally nilpotent subgroup. Let $R_3/R_2$ be a normal subgroup of $G/R_2$,
satisfying the following conditions: $R_3/R_2$ is radical, $G/R_3$ does not
include non-identity locally nilpotent normal subgroups. Using similar
arguments, we construct the ascending series of normal subgroups 
\begin{equation*}
\langle1\rangle = R_0 \leq R_1 \leq \ldots R_{\alpha} \leq R_{\alpha + 1}
\leq \ldots \leq R_\gamma = G,
\end{equation*}
whose factors are radical or locally finite, and if $R_{\alpha + 1}/R_\alpha$
is radical (respectively locally finite), then $R_{\alpha + 2}/R_{\alpha + 1}
$ is locally finite (respectively radical).

This series is called a \textit{standard series} of a generalized radical
group $G$.

\begin{lemma}
\label{L7} Let $G$ be a group and $A$ an minimax-antifinitary $\mathbb{Z}G$%
-module. Then every proper generalized radical subgroup of $G/C_G(A)$ is
soluble-by-finite.
\end{lemma}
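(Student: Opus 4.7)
The plan is to combine Corollary~\ref{C6}---applied whenever the subgroup at hand is not finitely generated---with a transfinite induction on the length of an ascending series for the finitely generated case, using the elementary fact that every subgroup of a generalized radical group is itself generalized radical (obtained by intersecting an ambient ascending series with the subgroup). I would first reduce to $C_G(A) = \langle 1 \rangle$ and let $H$ be a proper generalized radical subgroup of $G$, equipped with an ascending series $\langle 1 \rangle = R_0 \leq R_1 \leq \cdots \leq R_\gamma = H$ of normal subgroups of $H$ whose factors are locally nilpotent or locally finite, and proceed by transfinite induction on $\gamma$. If $H$ is not finitely generated, then $A/C_A(H)$ is minimax by the minimax-antifinitary hypothesis, and Corollary~\ref{C6} applied to $H$ acting on $A$ shows that every locally generalized radical subgroup of $H/C_H(A) = H$ is soluble-by-finite, so in particular $H$ itself is.

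Suppose instead that $H$ is finitely generated. If $\gamma$ is a limit ordinal, then $H = \bigcup_{\alpha < \gamma} R_\alpha$ forces $H = R_\alpha$ for some $\alpha < \gamma$, which shortens the series; iterating, we reduce to $\gamma = \beta + 1$. In that case $R_\beta$ is a proper subgroup of $H$ (hence of $G$), generalized radical via the truncated series of length $\beta$; either $R_\beta$ fails to be finitely generated and Corollary~\ref{C6} handles it directly, or it is finitely generated and the induction hypothesis applies. Either way $R_\beta$ is soluble-by-finite. Meanwhile $H/R_\beta$ is finitely generated and locally nilpotent or locally finite, hence nilpotent or finite. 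Since the class of soluble-by-finite groups is closed under extensions by finite or soluble groups, $H$ itself is soluble-by-finite.

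The main obstacle I expect is bookkeeping of the induction rather than a substantive mathematical difficulty: at each successor step one must check that $R_\beta$ remains proper in $G$ (immediate, since $R_\beta < H$ and $H$ is proper in $G$), that the truncated series keeps its factors locally nilpotent or locally finite (immediate), and that the finite-generation dichotomy feeds back into the induction cleanly without circularity. The only auxiliary algebraic ingredient is the closure of soluble-by-finite under extensions by nilpotent or finite quotients, which is handled by the standard centralizer argument---the centralizer of a finite normal subgroup has finite index---to upgrade finite-by-soluble groups to soluble-by-finite, followed by combining a normal soluble subgroup of finite index in the bottom with a soluble quotient on top.
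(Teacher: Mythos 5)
Your proof is correct, and its two key ingredients are the same as the paper's: any proper subgroup that is not finitely generated has minimax cocentralizer by the minimax-antifinitary hypothesis, so Corollary~\ref{C6} (applied to that subgroup acting on $A$, using that subgroups of generalized radical groups are generalized radical, hence locally generalized radical) disposes of it; only the finitely generated case needs an induction along the ascending series. Where you genuinely diverge is in the organization of that induction. The paper first passes to the \emph{standard series} of $L$, proves its length is finite by an $R_\omega$ argument ($R_\omega$ cannot be finitely generated, so Corollary~\ref{C6} makes it soluble-by-finite and collapses the series), and then runs a finite induction that contains a nested sub-induction on the upper locally nilpotent radical series of each radical factor, invoking polycyclicity of finitely generated nilpotent groups. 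You instead run a single transfinite induction on the length of an \emph{arbitrary} ascending series, killing limit ordinals by finite generation of $H$ and successor ordinals by the extension closure of the class of soluble-by-finite groups (the top factor being finitely generated and locally nilpotent or locally finite, hence nilpotent or finite). This is cleaner and subsumes the paper's nested induction; the price is that you must actually justify the closure fact you cite, in particular that a normal soluble subgroup of finite index in $R_\beta$ can be replaced by one normal in $H$ (e.g.\ the product of its finitely many $H$-conjugates modulo itself, or the unique maximal normal soluble subgroup of $R_\beta$ containing it, which is characteristic) before applying the centralizer argument to the resulting finite-by-soluble quotient. You sketch exactly this, so I regard the argument as complete; the paper avoids stating the closure property explicitly but relies on it just as much in its final "repeating the above arguments" step.
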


\begin{proof}

Again we will suppose that $C_G(A) = \langle1\rangle$. Let $L$ be an arbitrary proper generalized radical subgroup of  $G$. Let
$$\langle1\rangle = R_0 \leq R_1 \leq \ldots R_{\alpha} \leq  R_{\alpha + 1} \leq \ldots \leq R_\gamma = L,$$
be a standard series of $L$. Suppose that $\gamma\geq\omega$ ($\omega$ is the first infinite ordinal) and consider the subgroup $R_\omega$. Assume that $R_\omega$ is finitely generated, that is $R_\omega  = \langle u_1, \ldots u_t \rangle$ for some elements   $u_1, \ldots u_t$. The equation $R_\omega  = \bigcup_{n \in \mathbb{N}} R_n$  shows that there exists a positive integer $m$ such that  $u_1, \ldots u_t \in R_m$. But in this case, $R_\omega = R_m$ and we obtain a contradiction. This contradiction shows that $R_\omega$ is not finitely generated. It follows that $A/C_A(R_\omega)$ is minimax. Corollary~\ref{C6} shows that $R_\omega$ is soluble-by-finite.  In this case $R_\omega = R_2$ and we again obtain a contradiction. This contradiction shows that $\gamma$ must be finite, that  is $\gamma = k$ is some positive integer.

Now we will use induction by $k$. Consider the subgroup $R_1$. Then either $R_1$ is radical or $R_1$ is locally finite. If $R_1$ is not finitely generated, then $A/C_A(R_\omega)$ is minimax. Corollary~\ref{C6} shows that $R_1$ is soluble-by-finite. Suppose that $R_1$ is finitely generated. If $R_1$ is locally finite, then it is finite. Therefore  assume that $R_1$ is radical. Let
$$\langle1\rangle = V_0 \leq V_1 \leq \ldots V_{\alpha} \leq  V_{\alpha + 1} \leq \ldots \leq V_\eta = R_1,$$
be an ascending series of $R_1$  where $V_{\alpha + 1}/V_\alpha$  is the locally nilpotent radical of $R_1/V_\alpha$, $\alpha < \eta$. Using the above arguments we obtain that $\eta$ is finite, that  is $\eta = d$  is some positive integer. Let $m$ be a number such that all factors $V_{m + 1}/V_m, V_{m + 2}/V_{m + 1}, \ldots, V_d/V_{d-1}$ are finitely generated. Being locally nilpotent, they are polycyclic. It follows that $V_d/V_m$  is polycyclic. In particular if every subgroup $V_j$ is finitely generated, $1 \leq j \leq d$, then $R_1$ is polycyclic. Therefore assume that there is a positive integer $s$ such that $V_s$ is not finitely generated, but  a subgroup $V_j$ is finitely generated for all $j > s$. Then $A/C_A(V_s)$ is minimax and Corollary~\ref{C6} yields that $V_s$ is soluble. In this case $R_1/V_s$ is polycyclic, so that $R_1$ is soluble.

Suppose that we have already proved that all subgroups $R_1, R_2, \ldots, R_{k-1}$ are soluble-by-finite. Repeating the above arguments, we obtain that and $R_k$ is soluble-by-finite, and the result is proved.

\end{proof}

\begin{lemma}
\label{L8} Let $G$ be a group and $A$ an minimax-antifinitary $\mathbb{Z}G$%
-module. If $H$ is a proper subgroup of $G$ and $\mathbf{Coc}_{\mathbb{Z}%
-mmx}(G)$ does not include $H$, then $H$ is finitely generated.
\end{lemma}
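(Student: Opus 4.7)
The plan is to argue by contraposition: assume $H$ is not finitely generated and deduce that $H \subseteq \mathbf{Coc}_{\mathbb{Z}-mmx}(G)$.

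First, since $H$ is a proper subgroup of $G$ that is not finitely generated, the definition of a minimax-antifinitary $\mathbb{Z}G$-module applies directly to $H$: the factor-module $A/C_A(H)$ is minimax as a $\mathbb{Z}$-module.

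Next, I would invoke Corollary~\ref{C3}(i). For every element $x \in H$, the cyclic subgroup $\langle x \rangle$ satisfies $\langle x \rangle \leq H$, so the minimaxness of $A/C_A(H)$ propagates to give that $A/C_A(x) = A/C_A(\langle x \rangle)$ is minimax. This is exactly the defining condition for membership in $\mathbf{Coc}_{\mathbb{Z}-mmx}(G)$, so $x \in \mathbf{Coc}_{\mathbb{Z}-mmx}(G)$ for every $x \in H$, and therefore $H \subseteq \mathbf{Coc}_{\mathbb{Z}-mmx}(G)$, contradicting the hypothesis.

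There is essentially no obstacle here; the lemma is an immediate bookkeeping consequence of the definition of a minimax-antifinitary module together with the formation property of minimax $\mathbb{Z}$-modules (which was already packaged as Corollary~\ref{C3}). The only thing to double-check is that the definition of minimax-antifinitary is stated for \emph{subgroups}, not merely elements, so that $A/C_A(H)$ being minimax is available as soon as $H$ is a proper non-finitely-generated subgroup; having confirmed this, the contrapositive argument completes the proof in two lines.
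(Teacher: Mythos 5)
Your proposal is correct and follows essentially the same route as the paper: assume $H$ is not finitely generated, use the definition of minimax-antifinitary to get that $A/C_A(H)$ is minimax, then apply Corollary~\ref{C3}(i) to each cyclic subgroup $\langle x\rangle\leq H$ to conclude $H\leq\mathbf{Coc}_{\mathbb{Z}-mmx}(G)$, contradicting the hypothesis. The only difference is that you frame it as contraposition while the paper phrases it as a proof by contradiction, which is immaterial.
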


\begin{proof}
Indeed if we suppose that $H$ is not finitely generated, then $A/C_A(H)$  is minimax. Corollary~\ref{C3} shows that $A/C_A(h)$ is minimax for each element $h \in H$. It follows that $H \leq  \mathbf{Coc}_{\mathbb{Z}-mmx}(G)$, and we obtain a contradiction with the choice of $H$.
\end{proof}

\section{Proofs of the main results.}

\begin{proposition}
\label{P2.1} Let $G$ be a locally generalized radical group and $A$ an
minimax-antifinitary $\mathbb{Z}G$-module. If $G/\mathbf{Coc}_{\mathbb{Z}%
-mmx}(G)$ is not finitely generated, then $G/C_G(A)$ is a quasicyclic $q$%
-group for some prime $q$.
\end{proposition}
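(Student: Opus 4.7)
The plan is first to pin down the structure of $G/D$ (where $D=\mathbf{Coc}_{\mathbb{Z}-mmx}(G)$) and then to lift this information back to $G$. We may assume $C_G(A)=\langle 1\rangle$, so the goal is to show $G$ itself is a quasicyclic $q$-group. I would begin by observing that every proper subgroup of $G/D$ is finitely generated: a proper $H/D\leq G/D$ comes from a proper $H\leq G$ containing $D$; either $H=D$ (giving a trivial quotient) or $H\not\leq D$, in which case Lemma~\ref{L8} forces $H$, hence $H/D$, to be finitely generated. Since $G/D$ is locally generalized radical (as a quotient of the locally generalized radical $G$) and by hypothesis not finitely generated, the classical theorem on non-finitely-generated locally generalized radical groups all of whose proper subgroups are finitely generated forces $G/D$ to be a quasicyclic $q$-group for some prime $q$.

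Next I would show $D$ is finitely generated. Writing $G/D=\bigcup_{n\geq 1}\langle x_nD\rangle$ with $|\langle x_nD\rangle|=q^n$, set $H_n=\langle x_n\rangle D$. Each $H_n$ is a proper subgroup of $G$ (else $G/D=H_n/D$ would be finite, contradicting non-finite generation), and $H_n\not\leq D$ since $x_n\notin D$ for $n\geq 1$; so Lemma~\ref{L8} gives $H_n$ finitely generated, and since $D$ has finite index $q^n$ in $H_n$, Reidemeister-Schreier yields that $D$ itself is finitely generated. Corollary~\ref{C3}(ii) then makes $A/C_A(D)$ a minimax $\mathbb{Z}$-module. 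A short further argument shows $G$ is $\mathfrak{F}$-perfect: a proper subgroup of finite index in the non-f.g.\ group $G$ cannot be finitely generated, so by Lemma~\ref{L8} it lies in $D$, making $G/D$ finite and contradicting non-finite generation.

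The final and most delicate step is to promote the structure of $G/D$ back to $G$. My plan is to apply Lemma~\ref{L5}(iii) to $G$ acting on the minimax module $A/C_A(D)$: since $D$ acts trivially there, the image of $G$ in the automorphism group of $A/C_A(D)$ is a quotient of the quasicyclic $G/D$, hence quasicyclic or trivial, and in either case $\mathfrak{F}$-perfect and periodic. Lemma~\ref{L5}(iii) then yields the two-step unipotent identity $[A,G,G]\leq C_A(D)$, equivalently $A(\omega\mathbb{Z}G)^2(\omega\mathbb{Z}D)=\langle 0\rangle$. Combining this with the normality and finite generation of $D$, with $C_G(A)=\langle 1\rangle$, and with Kaloujnin's theorem applied to the series $\langle 0\rangle\leq C_A(G)\leq C_A(D)\leq A$, one expects to force every $d\in D$ to act trivially on $A$, so that $D\leq C_G(A)=\langle 1\rangle$ and $G\cong G/D$ is a quasicyclic $q$-group. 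The main obstacle is precisely this last implication: extracting from the two-step unipotent action the vanishing of $D$, which requires a careful analysis of how the generators of $D$ interact with the ascending union of finite cyclic subgroups of $G/D$ acting on the cocentralizer filtration of $A$.
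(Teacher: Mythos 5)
The first half of your argument is the paper's own: Lemma~\ref{L8} shows every proper subgroup of $G/D$ is finitely generated, Proposition~2.7 of~\cite{KMO2008} then makes $G/D$ quasicyclic, and the finite-index argument makes $D=\mathbf{Coc}_{\mathbb{Z}-mmx}(G)$ itself finitely generated. Up to that point you and the paper coincide.

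The final step, however, has a genuine gap, and the gap cannot be closed along the route you describe, because the statement you are aiming at is false: under the hypotheses of the proposition the subgroup $D$ need \emph{not} act trivially on $A$; it can be a non-trivial finite subgroup of the quasicyclic group $G$. Concretely, let $G=\Gamma$ be a quasicyclic $2$-group, $F=\langle\gamma_1\rangle$ its subgroup of order $2$, let $A_2=\langle a\rangle\oplus B$ be the $\mathbb{Z}\Gamma$-module constructed at the end of the paper (on which $\Gamma$ acts faithfully with $A_2/C_{A_2}(x)$ minimax for every $x$), and let $A_1$ be an infinite simple $\mathbf{F}_p(G/F)$-module with trivial centralizers, inflated to $G$. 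Then $A=A_1\oplus A_2$ is minimax-antifinitary (the condition on proper non-finitely-generated subgroups is vacuous), $C_G(A)=\langle1\rangle$, $G/\mathbf{Coc}_{\mathbb{Z}-mmx}(G)$ is quasicyclic, yet $\mathbf{Coc}_{\mathbb{Z}-mmx}(G)=F\neq\langle1\rangle$. So no analysis of the identity $[[A,G],G]\leq C_A(D)$ can yield $D\leq C_G(A)$. (Your intermediate assertion that $D$ acts trivially on $A/C_A(D)$ is also unjustified: elements of $D$ fix $C_A(D)$ pointwise but need not centralize the quotient, which is exactly the point the paper makes when discussing $H/C_H(A/C_A(H))$.) The paper closes the argument purely group-theoretically instead: Lemma~\ref{L7} makes the finitely generated $D$ soluble-by-finite; with $S$ its maximal normal soluble subgroup, $D/[S,S]$ is finitely generated and abelian-by-finite, hence noetherian, so every proper subgroup of $G/[S,S]$ is finitely generated and $G/[S,S]$ is again quasicyclic by the same cited proposition; descending the derived series forces $S$ to be abelian, whence $D$ is noetherian, every proper subgroup of $G$ itself is finitely generated, and $G$ is quasicyclic --- with $D$ surviving as a possibly non-trivial finite subgroup. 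You need to replace your last step by an argument of this kind.
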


\begin{proof}
Again suppose that $C_G(A) = \langle1\rangle$. Let $M = \mathbf{Coc}_{\mathbb{Z}-mmx}(G)$. Let $H$ be a proper subgroup of $G$. If $M$  does not include $H$, then Lemma~\ref{L8} shows that $H$ is finitely generated. In particular if $M \leq H$, then $H/M$ is finitely generated. In other words, every proper subgroup of $G/M$ is finitely generated. By Proposition~2.7 of the paper~\cite{KMO2008}, $G/M$ is a quasicyclic $q$-group for some prime $q$.

Let $L/M$ be a proper subgroup of $G/M$, then $L/M$ is a finite cyclic subgroup. An application of Lemma~\ref{L8} shows that  the subgroup $L$ is finitely generated. The finiteness of index  $|L : M|$  implies that $M$ is finitely generated (see, for example,~\cite[Corollary 7.2.1]{HM1959}).  Using Lemma~\ref{L7} we obtain that $M$ is soluble-by-finite. Let $S$ be a maximal normal soluble subgroup of $M$, then $M/S$ is finite. It follows that $S$ is $G$-invariant. Let $D = [S, \ S]$.  The factor-group  $M/D$ is abelian-by-finite and finitely generated, therefore it is noetherian. Let $V/D$  be a proper subgroup of $G/D$. If $M/D$ does not include $V/D$, then $M$ does not include $V$, and as above, $V$ is finitely generated. Then $V/D$ is also finitely generated. If $V/D \leq  M/D$, then $V/D$ is finitely generated too. Thus every proper subgroup of $G/D$ is finitely generated, and application of Proposition 2.7 of the paper~\cite{KMO2008}  shows that $G/D$ is a quasicyclic group. Since $M/D$ is a proper subgroup of $G/D$, $M/D$ is a finite cyclic subgroup. Suppose that $D \neq \langle1\rangle$, then $K = [D, D] \neq D$. Repeating the above arguments, we obtain that  $G/K$ is a quasicyclic group. In particular, it is abelian. Then $S/K$ is abelian, which follows that $K \geq [S, S] = D$, and we obtain a contradiction. This contradiction shows that $D = \langle1\rangle$, so that $G$ is a quasicyclic group.

\end{proof}

\begin{lemma}
\label{L2} Let $G$ be a locally generalized radical group and $A$ be an
minimax-antifinitary $\mathbb{Z}G$-module. Suppose that $G \neq \mathbf{Coc}%
_{\mathbb{Z}-mmx}(G)$, $G$ is not finitely generated, and $G/\mathbf{Coc}_{%
\mathbb{Z}-mmx}(G)$ is finitely generated. Then $G$ is soluble and $G/%
\mathbf{Coc}_{\mathbb{Z}-mmx}(G)$ is a group of a prime order $p$.
\end{lemma}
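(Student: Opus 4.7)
Write $M := \mathbf{Coc}_{\mathbb{Z}-mmx}(G)$ and assume, as in the preceding proofs, that $C_G(A) = \langle 1 \rangle$. My first observation is that $M$ cannot be finitely generated: otherwise, combined with the hypothesized finite generation of $G/M$, the group $G$ itself would be finitely generated, contradicting the standing assumption. Since $M$ is then a proper non-finitely-generated subgroup of $G$, the minimax-antifinitary hypothesis forces $A/C_A(M)$ to be minimax as a $\mathbb{Z}$-module. Applying Corollary~\ref{C6} with $M$ playing the role of $G$ (noting that $C_M(A) = \langle 1 \rangle$) shows that every locally generalized radical subgroup of $M$, in particular $M$ itself, is soluble-by-finite.

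The heart of the argument is pinning down $G/M$. Any nontrivial subgroup of $G/M$ is the image of a subgroup $H$ of $G$ strictly containing $M$; if this image is also proper in $G/M$, then $H$ is a proper subgroup of $G$ not contained in $M$, and Lemma~\ref{L8} gives $H$ finitely generated, hence so is its image in $G/M$. Moreover this image cannot be a nontrivial finite subgroup of $G/M$, for then $|H:M|$ would be finite and Hall's corollary (\cite[Corollary 7.2.1]{HM1959}) would force $M$ itself to be finitely generated, contradicting the first step. So every subgroup of $G/M$ is finitely generated, which makes $G/M$ noetherian. The standard series of $G/M$ (which is generalized radical by virtue of being finitely generated and locally generalized radical) then has finite length by the ascending chain condition, and each factor is polycyclic -- being a finitely generated locally nilpotent or locally finite group -- so $G/M$ itself is polycyclic. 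With no nontrivial finite proper subgroup, $G/M$ is either cyclic of prime order $p$ or infinite and torsion-free. The latter case is ruled out because a nontrivial torsion-free polycyclic group is residually finite and so admits a proper normal subgroup of finite index, whose preimage in $G$ would be a proper subgroup of $G$ strictly containing $M$ with finite index in $G$; by Lemma~\ref{L8} this preimage is finitely generated, which forces $G$ to be finitely generated -- a contradiction. Hence $G/M$ has prime order $p$.

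To conclude that $G$ is soluble I would apply Kaloujnin's theorem~\cite{KL1953} to the series $\langle 0 \rangle \leq C_A(M) \leq A$ of $\mathbb{Z}M$-submodules: the subgroup $K := C_M(A/C_A(M))$ of elements of $M$ acting trivially on both factors is abelian, and $M/K$ embeds into the $\mathbb{Z}$-automorphism group of the minimax module $A/C_A(M)$. The standard decomposition of a minimax $\mathbb{Z}$-module into torsion-free-of-finite-rank, divisible-Chernikov, and finite parts affords natural linear representations of $M/K$ over characteristic-zero fields, and by Tits's theorem $M/K$ is soluble-by-finite; refining this using that $M/K$ is locally generalized radical should rule out nonabelian finite simple composition factors and yield $M/K$ soluble, so that $M$ is soluble and, together with $G/M$ cyclic of prime order, $G$ is soluble. \emph{The principal obstacle} is precisely this last step: promoting "soluble-by-finite" to "soluble" for $M$ requires eliminating nonabelian finite simple composition factors of $M$ modulo its soluble radical, which is not automatic from the general Tits-type results already invoked and requires careful use of the minimax-antifinitary structure on $A$.
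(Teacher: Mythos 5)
Your route to the prime-order statement is correct but considerably heavier than the paper's. The paper takes a minimal finite set $M_0$ with $G=\langle M_0,D\rangle$ (here $D=\mathbf{Coc}_{\mathbb{Z}-mmx}(G)$), splits $M_0=X\cup Y$ if $|M_0|\geq 2$, applies Lemma~\ref{L8} to the proper subgroups $\langle X,D\rangle$ and $\langle Y,D\rangle$ to force $G$ finitely generated, and concludes $G/D$ is cyclic; if $|G/D|$ were not prime, a subgroup $B$ with $D<B<G$ and $|G/B|$ prime would be finitely generated by Lemma~\ref{L8} and of finite index, again forcing $G$ finitely generated. Your detour through the noetherian property of $G/D$ works, but note two small inaccuracies: the factors of a standard series are radical or locally finite (not locally nilpotent), and a noetherian locally finite factor is merely finite, not necessarily polycyclic; so what you actually obtain is that $G/D$ is polycyclic-by-finite. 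That is harmless, since polycyclic-by-finite groups are still residually finite and your finite-index contradiction goes through.

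The genuine gap is exactly where you flagged it: you obtain only that $D$ is soluble-by-finite, and the refinement you sketch cannot close it --- a locally generalized radical group can perfectly well have nonabelian finite simple composition factors (any finite simple group is locally finite, hence generalized radical), so no Tits-type or Kaloujnin-type linear argument will eliminate them. The paper closes the gap not with representation theory but with one more application of Lemma~\ref{L8}. Let $S$ be a maximal normal soluble subgroup of finite index in $D$ and choose $g$ with $G=\langle g\rangle D$. If $D\neq S$, then $D/S$ is finite and non-soluble, so $S\langle g^p\rangle\neq D$ (its image in $D/S$ is cyclic, hence soluble); consequently $S\langle g\rangle$ is a proper subgroup of $G$ not contained in $D$, since $S\langle g\rangle\cap D=S(\langle g\rangle\cap D)=S\langle g^p\rangle$. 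Lemma~\ref{L8} makes $S\langle g\rangle$ finitely generated, hence so is its finite-index subgroup $S\langle g^p\rangle$ by~\cite[Corollary 7.2.1]{HM1959}; and since $S\langle g^p\rangle$ has finite index in $D$, the subgroup $D$ itself is finitely generated, contradicting your own first observation. Hence $D=S$ is soluble, and $G=D\langle g\rangle$ is soluble. This is the step missing from your plan; the rest is recoverable.
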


\begin{proof}
Again suppose that $C_G(A) = \langle1\rangle$. Let $D = \mathbf{Coc}_{\mathbb{Z}-mmx}(G)$. Since $G/D$ is finitely generated, $G=\langle M, D \rangle$ for some finite subset $M$. We may suppose that $M$ is minimal finite set with this property, that is  $G \neq \langle S, D \rangle$  for each proper subset $S$ of $M$. Suppose that $| M | \geq 2$. Then $M$ includes two proper subsets  $X$, $Y$ such that $M = X \cup Y$. By the choice of $M$, the subgroups $\langle X, D \rangle$ and  $\langle Y, D \rangle$  are proper and $\langle X, D \rangle \neq D$, $\langle Y, D \rangle \neq D$. By Lemma~\ref{L8} both subgroups $\langle X, D \rangle$ and $\langle Y, D\rangle$ are finitely generated. The equation $X \cup Y = M$ implies that $G = \langle X, Y, D \rangle$ is finitely generated. This contradiction shows that $|M| = 1$. In other words, $G/D$ is cyclic. Suppose that  $|G/D|$ is not a prime. Then $G$  includes a proper subgroup $B$ such that $D \leq B$, $B \neq D$,  and $G/B$ has a prime order. Using again Lemma~\ref{L8} we obtain that $B$ is finitely generated. The finiteness of $G/B$ follows that $G$ is finitely generated. This final contradiction proves that  $G/D$ has a prime order. Choose an element $g$ such that $G = \langle g, D \rangle$.

Since $G$ is not finitely generated, $D$ can not be finitely generated. Using Lemma~\ref{L7}, we obtain that $D$ is soluble-by-finite. Let $S$ be a maximal normal soluble subgroup of $D$ having finite index. Suppose that $D \neq S$. Clearly $S$ is $G$-invariant. Since  $D/S$  is finite non-soluble subgroup, $S\langle g^p \rangle \neq D$. It follows that $S\langle g \rangle$ is a proper subgroup of $G$. Since $D$ does not include $S\langle g \rangle$, $S\langle g \rangle$  is finitely generated by Lemma~\ref{L8}. Then $S\langle g^p \rangle$ is finitely generated (see, for example,~\cite[Corollary 7.2.1]{HM1959}). Since index  $| D : S |$ is finite, $D$ is finitely generated, and we obtain a contradiction. This contradiction shows that $D$ is soluble. Then the entire group $G$ is soluble.

\end{proof}

\begin{corollary}
\label{C3.3} Let $G$ be a locally generalized radical group and $A$ a
minimax-antifinitary $\mathbb{Z}G$-module. Suppose that $G \neq \mathbf{Coc}%
_{\mathbb{Z}-mmx}(G)$, $G$ is not finitely generated and $G/\mathbf{Coc}_{%
\mathbb{Z}-mmx}(G)$ is finitely generated. Let $g$ be an element of $G$ with
the property $G = \langle g \rangle \mathbf{Coc}_{\mathbb{Z}-mmx}(G)$. If $H$
is a normal subgroup of $G$, having finite index, then $H\langle g \rangle =
G$. Moreover, $G/H$ is a $p$-group where $p=|G/\mathbf{Coc}_{\mathbb{Z}%
-mmx}(G)|$.
\end{corollary}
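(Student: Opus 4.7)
The plan is to deduce the first assertion directly from Lemma~\ref{L8}, and then to leverage it together with the prime-order fact $|G/D|=p$ (from Lemma~\ref{L2}) to force $G/H$ to be a $p$-group. Throughout I write $D=\mathbf{Coc}_{\mathbb{Z}-mmx}(G)$, and I record at the outset that $D$ itself is not finitely generated: otherwise $G=D\langle g\rangle$ would be finitely generated, against hypothesis.

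For the first assertion, I would argue by contradiction. If $H\langle g\rangle$ were proper in $G$, then since $g\notin D$ (because $G=\langle g\rangle D\neq D$), the subgroup $H\langle g\rangle$ would not be contained in $D$; Lemma~\ref{L8} would then make it finitely generated. Since $H$ has finite index in $H\langle g\rangle$, it would also be finitely generated by \cite[Corollary 7.2.1]{HM1959}, and its finite index in $G$ would propagate finite generation up to $G$. This contradicts the hypothesis, so $H\langle g\rangle=G$, and consequently $G/H=\langle gH\rangle$ is cyclic.

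For the ``moreover'' clause, suppose $G/H$ is not a $p$-group and pick a prime $q\neq p$ dividing $|G/H|$. Since $G/H$ is cyclic, there is a subgroup $L$ with $H\leq L$, normal in $G$, such that $[G:L]=q$. A divisibility check excludes $L\leq D$, because $L\leq D$ would force $p\mid q$ via $[G:L]=[G:D][D:L]$. Lemma~\ref{L8} then yields that $L$ is finitely generated, and applying the first assertion of the corollary with $L$ in place of $H$ gives $L\langle g\rangle=G$, so $g^q\in L$. Together with $g^p\in D$ and $\gcd(p,q)=1$, a Bezout combination places $g$ into the normal subgroup $DL$, whence $G=DL$.

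Now the second isomorphism theorem yields $L/(L\cap D)\cong G/D$ of order $p$ and $D/(L\cap D)\cong G/L$ of order $q$. Finite generation of $L$ descends to $L\cap D$ (a finite-index subgroup) by \cite[Corollary 7.2.1]{HM1959}, and then ascends to $D$ (which contains $L\cap D$ with finite index $q$), contradicting the opening observation. Hence $|G/H|$ is a power of $p$. The main obstacle is the second part; the first is an almost direct invocation of Lemma~\ref{L8}, whereas the second requires pooling the two coprime exponents $p$ and $q$ of $g$ in order to collapse $G$ onto $DL$ and then defeat the non-finite-generation of $D$.
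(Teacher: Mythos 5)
Your proof is correct, and while the first assertion is handled exactly as in the paper (if $H\langle g\rangle$ were proper it could not lie in $D$ since $g\notin D$, so Lemma~\ref{L8} makes it finitely generated and the finite index then forces $G$ to be finitely generated), your treatment of the ``moreover'' clause is genuinely different from the paper's. The paper picks a Sylow $p$-subgroup $P/H$ of $G/H$, notes that $P$ is proper of finite index and hence \emph{not} finitely generated, so that $A/C_A(P)$ is minimax and $P\leq D$ by Corollary~\ref{C3}; this is then incompatible with $[G:P]$ being coprime to $p$ while $[G:D]=p$ divides it. You run the dual argument: using cyclicity of $G/H$ (which the first assertion hands you for free) you produce a normal $L\geq H$ of prime index $q\neq p$, exclude $L\leq D$ by divisibility of indices, and then apply Lemma~\ref{L8} in the opposite direction to conclude $L$ \emph{is} finitely generated, ultimately contradicting the non-finite-generation of $D$. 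Both routes are valid; yours avoids Sylow theory at the cost of needing $G/H$ cyclic. One remark on economy: your argument is longer than necessary, since the moment $L$ is known to be finitely generated and of finite index $q$ in $G$, the group $G$ itself is finitely generated, which already contradicts the hypothesis; the Bezout step, the identity $G=DL$, and the passage of finite generation down to $L\cap D$ and back up to $D$ are all correct but superfluous.
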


\begin{proof}
Let $D = \mathbf{Coc}_{\mathbb{Z}-mmx}(G)$. If we assume that $H\langle g \rangle$ is a proper subgroup of $G$, then the choice of $g$ yields that $D$ does not include $H\langle g \rangle$. By Lemma~\ref{L8}, $H\langle g \rangle$  is finitely generated. Since $H\langle g \rangle$  has finite index, $G$  must be finitely generated, so we obtain a contradiction. This contradiction shows that $H\langle g \rangle = G$.

Suppose that $\Pi(G/H) \neq \{p\}$. Let $P/H$ be a Sylow  $p$-subgroup of $G/H$. Then $P/H$ is a proper subgroup of $G/H$. Since $P$  has finite index, $P$  is not finitely generated. Then $A/C_A(P)$ is artinian-by-(finite rank). It follows that $P \leq D$. On the other hand, $G/D$ is a non-identity $p$-group, therefore $D$ can not include $P$. This contradiction proves that $G/H$ is a $p$-group.
\end{proof}

\begin{proposition}
\label{P2.4} Let $G$ be a locally generalized radical group and $A$ a
minimax-antifinitary $\mathbb{Z}G$-module. Suppose that $G \neq \mathbf{Coc}%
_{\mathbb{Z}-mmx}(G)$, $G$ is not finitely generated and $G/\mathbf{Coc}_{%
\mathbb{Z}-mmx}(G)$ is finitely generated. If $G/[G, G]$ is infinite, then $%
G = Q \times \langle g \rangle$ where $Q$ is a quasicyclic $q$-subgroup, $g$
is a $p$-element and $g^p \in \mathbf{Coc}_{\mathbb{Z}-mmx}(G)$, where $p$, $%
q$ are primes (not necessary different).
\end{proposition}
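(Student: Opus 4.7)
By Lemma~\ref{L2} (applicable under the given hypotheses), $G$ is soluble and $G/D$ is cyclic of prime order~$p$, and one can choose an element $g$ with $G=\langle g\rangle D$; automatically $g^p\in D$. Corollary~\ref{C3.3} then gives that every finite quotient of $G$ is a $p$-group, so in particular every finite quotient of the abelianization $G/[G,G]$ is a finite abelian $p$-group. Note that $D$ is not finitely generated (else $G=\langle g\rangle D$ would be, contradicting the hypothesis), so the minimax-antifinitary hypothesis gives that $A/C_A(D)$ is minimax.

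\textbf{Step 1: $G$ is abelian.} The goal here is to deduce from $G/[G,G]$ infinite, together with the minimax hypothesis, that $[G,G]=\langle 1\rangle$. I would argue by contradiction: suppose $G':=[G,G]\neq\langle 1\rangle$. Since $G$ is soluble, let $R$ denote the last nontrivial term of its derived series; then $R$ is abelian, $G$-invariant, and contained in $D$. Since $C_A(D)$ is a $G$-invariant submodule (because $D\triangleleft G$) and $A/C_A(D)$ is minimax, Lemma~\ref{L5}-type arguments applied to the action of $G$ on this quotient show that $G/C_G(A/C_A(D))$ is soluble-by-finite with tight structural control. Combining this with the fact that every finite quotient of $G$ is a $p$-group and that the infiniteness of $G/[G,G]$ yields arbitrarily large abelian $p$-group quotients of $G$, together with the Kaloujnin stabilizer argument as used in Lemma~\ref{L5}, a contradiction should follow from $R\neq\langle 1\rangle$. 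This step --- using infiniteness of the abelianization to extract the abelian property of $G$ --- is the principal obstacle of the proof.

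\textbf{Step 2: Structure of $G$ as a direct product.} Once $G$ is abelian, $D$ is abelian of index $p$ in $G$ and is not finitely generated. Since every finite quotient of $G$ is an abelian $p$-group, $G$ is a $\mathbb{Z}_{(p)}$-module, i.e., $\ell$-divisible for every prime $\ell\neq p$. A $\mathbb{Z}$-summand of $G$ would give a finite cyclic quotient coprime to $p$ contradicting Corollary~\ref{C3.3}, so $G$ has no free summand; combined with the minimax-antifinitary condition on proper non-finitely-generated subgroups (Lemma~\ref{L8}), one shows $G$ is a torsion group. Decomposing $G=\bigoplus_{\ell\in\Pi(G)}G_\ell$ into Sylow subgroups, every $G_\ell$ with $\ell\neq p$ lies in $D$ (since $G/D$ has order $p$); at most one $G_\ell$ can be infinite, as two distinct infinite Sylow subgroups would both be proper non-f.g.\ subgroups and their joint cocentralizer structure would clash with the minimaxity of $A/C_A(D)$. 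Call the unique infinite Sylow subgroup $Q=G_q$; the minimax-antifinitary structure together with the Chernikov classification forces $Q$ to be divisible, hence a direct sum of quasicyclic $q$-groups, and the ``at most one'' argument reduces $Q$ to a single quasicyclic $q$-group. The complementary summand of $G$ is a finite cyclic subgroup, which must be a $p$-group (else $G/D$ would not be a $p$-group); choosing $g$ as the natural generator of this summand (and shifting by an element of $Q$ if needed so that $gD$ generates $G/D$) yields $G=Q\times\langle g\rangle$ with $g$ a $p$-element satisfying $g^p\in D$, as required.

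The principal difficulty lies in Step 1; once $G$ is abelian, Step 2 is a structural computation using the classification of abelian groups with the given minimax and cocentralizer constraints.
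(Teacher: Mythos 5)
Your Step 1 is the entire content of this proposition, and you have not proved it: you list some ingredients (Lemma~\ref{L5}, Kaloujnin's theorem, Corollary~\ref{C3.3}) and then write that ``a contradiction should follow'' from $R\neq\langle1\rangle$. Moreover, the one concrete mechanism you name is false: the infiniteness of $G/[G,G]$ does \emph{not} yield arbitrarily large abelian $p$-group quotients of $G$. In the very configuration the proposition asserts, $G=Q\times\langle g\rangle$ with $Q$ quasicyclic is its own (infinite) abelianization, yet every finite quotient of $G$ has order at most $|\langle g\rangle|$ because $Q$ is divisible. Also, applying Lemma~\ref{L5}-type arguments to $A/C_A(D)$ only controls $G/C_G(A/C_A(D))$, and $G$ is already known to be soluble by Lemma~\ref{L2}, so ``soluble-by-finite with tight structural control'' adds nothing that bears on $[G,G]$. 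The paper proceeds in the opposite order to yours: with $K=[G,G]$, it first shows $K\langle g\rangle$ is proper (if not, $gK$ has infinite order and the two subgroups $K\langle g^{r_1}\rangle$, $K\langle g^{r_2}\rangle$ for distinct primes $r_1,r_2$ are proper of finite index, hence not finitely generated, hence have minimax cocentralizers, forcing $g\in D$ by Corollary~\ref{C3}); then that $G/(K\langle g\rangle)$ is $\mathfrak{F}$-perfect, so there is $P\geq K\langle g\rangle$ with $G/P$ quasicyclic and $P$ finitely generated by Lemma~\ref{L8}, whence $G/K$ is minimax; then that $G/K$ is periodic (a nontrivial torsion-free quotient $D/T$ would give a prime $r\neq p$ with $(D/T)^r\neq D/T$ and a finite quotient with $\Pi(G/L)=\{r,p\}$, against Corollary~\ref{C3.3}), hence Chernikov with $G/K=Q/K\times\langle gK\rangle$; and only \emph{then} kills $K$ by a descent on the derived series, using that $K$ is finitely generated, choosing a finite-index $G$-invariant $V/[K,K]$ in $K/[K,K]$, and observing that the divisible part of $G/V$ is central so $G/V$ is abelian, forcing $K\leq V$. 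None of this descent appears in your outline, and it cannot be run before the Chernikov structure of $G/K$ is in hand.

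Step 2 also has a genuine gap even granting that $G$ is abelian: from ``$G$ has no free summand'' plus Lemma~\ref{L8} you cannot conclude that $G$ is torsion --- a torsion-free divisible quotient (of $\mathbb{Q}$-type) is not excluded by the absence of $\mathbb{Z}$-summands, and Lemma~\ref{L8} says nothing about subgroups contained in $D$. You must first establish that $G$ is minimax (via the $\mathfrak{F}$-perfect quotient argument producing a finitely generated $P$ with $G/P$ quasicyclic) before the torsion-free part can be attacked with the infinitude of the prime set $\pi$ and Corollary~\ref{C3.3}. Your ``at most one infinite Sylow subgroup'' argument is likewise unsubstantiated: two infinite Sylow subgroups both have minimax cocentralizers and Corollary~\ref{C3} happily combines them, so no clash with the minimaxity of $A/C_A(D)$ arises; the rank-one conclusion really comes from $Q$ being isomorphic to the quasicyclic quotient $G/P$ modulo a finite subgroup.
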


\begin{proof}
As usual we suppose that $C_G(A) = \langle1\rangle$. Let $D = \mathbf{Coc}_{\mathbb{Z}-mmx}(G)$. By Lemma~\ref{L2}, $G$ is soluble and $G/D$  is a group of a prime order $p$. Choose an element $g$ such that $G = \langle g, D \rangle$.  Put $K = [G, G]$. Then $K \leq D$. Suppose that $K\langle g \rangle = G$. Since  $G/K$  is infinite, from  $G/K = K\langle g \rangle/K \cong \langle g \rangle/(\langle g \rangle \cap K)$  we obtain that $gK$ has infinite order. Let $r_1$, $r_2$  be two distinct primes. Then   $K\langle g^{r_j} \rangle$  is a proper subgroup of $G$. Since it has finite index in $G$, $K\langle g^{r_j} \rangle$  is not finitely generated. It follows that $A/C_A(K\langle g^{r_j} \rangle)$ is minimax. It is true for every $j \in \{1, 2\}$. The equation $r_1 \neq r_2$ implies that $\langle g \rangle = \langle g^{r_1} \rangle \langle g^{r_2} \rangle$. Corollary~\ref{C3} shows that  $A/C_A(\langle g \rangle)$  is minimax, that is $g \in D$, and we obtain a contradiction with the choice of $g$. This contradiction shows that $K\langle g \rangle$ is a proper subgroup of $G$.

Let $Z/(K\langle g \rangle)$ be a proper  subgroup of $G/(K\langle g \rangle)$. Then $D$ does not include $Z$ and Lemma~\ref{L8}  shows that $Z$ is finitely generated. If we assume that $Z$ has finite index in $G$, then $G$ must be finitely generated, so we obtain a contradiction. This contradiction shows that the factor-group $G/(K\langle g \rangle)$ is $\mathfrak{F}$-perfect. Then $G/(K\langle g \rangle)$  includes a subgroup  $P/(K\langle g \rangle)$ such that $G/P$ is a quasicyclic $q$-group for some prime $q$.  A subgroup  $P$ contains an element $g$, therefore $D$ does not include $P$. By Lemma~\ref{L8}, $P$  is finitely generated. It follows that  $G/K$  is an abelian minimax group. Suppose that $\mathbf{Tor}(G/K) \neq G/K$. Then $T/K = \mathbf{Tor}(D/K) \neq D/K$.  Put
$$\pi  = \{ r \ | \ r \mbox{ is a prime such that }  D/T \neq (D/T)^r \}.$$
Since $D/T$ is torsion-free and minimax, the set  $\pi$  is infinite. Therefore we can choose a prime $r$ such that $r \neq p$ and $r \in \pi$. Let  $L/T = (D/T)^r$, then $D/L$ is a non-identity elementary abelian $r$-group. By the choice of  $L$, $\Pi(G/L) = \{ r, p \}$, and we obtain a contradiction with Corollary~\ref{C3.3}. This contradiction shows that $G/K$  is periodic. In this case, $P/K$ is finite, so that $G/K$ is a Chernikov group. Let $Q/K$ be a divisible part of $G/K$. An isomorphism $Q/K \cong G/P$ shows that $Q/K$ is a quasicyclic $q$-subgroup. Since $Q$ has finite index, then an application of Corollary~\ref{C3.3} shows that $G = Q\langle g \rangle$ and $G/Q$ is a $p$-group. It follows that $G/K = Q/K \times \langle gK \rangle$ (see, for example,~\cite[Theorem 21.2]{FL1970}).

Suppose that $K \neq \langle 1\rangle$. Then $L = [K, K] \neq K$. We have already proved above that $K\langle g \rangle$ is a proper subgroup of $G$. Since $D$ does not include $K\langle g \rangle$, Lemma~\ref{L8} shows that $K\langle g \rangle$  is finitely generated. The fact that $G/K$ is periodic implies that K  has finite index in $K\langle g \rangle$. Then $K$ is finitely generated (see, for example,~\cite[Corollary 7.2.1]{HM1959}).  Thus $K/L$ is a finitely generated abelian group. Then $K/L$ includes a proper  $G$-invariant subgroup $V/L$, having finite index in $K/L$ (this subgroup can be identity). Then  $G/V$  is a Chernikov group, having finite derived subgroup. Let  $Q_1/V$  be the divisible part of  $G/V$, then $Q_1/V \cong Q/K$, so that  $Q_1$/$V$  is a quasicyclic  $q$-subgroup. Since $[G/V, G/V]$ is finite, $Q_1/V \leq \zeta(G/V)$. Since index  $| G : Q_1 |$ is finite, $G = Q_1\langle g \rangle$ by Corollary~\ref{C3.3}. This equation together with inclusion $Q_1/V \leq \zeta(G/V)$ implies that $G/V$  is abelian. But in this case $K \leq V$, and we obtain a contradiction with the choice of $V$. This contradiction proves that $K = \langle 1 \rangle$.
\end{proof}

\begin{proposition}
\label{P2.5} Let $G$ be a locally generalized radical group and $A$ a
minimax-antifinitary $\mathbb{Z}G$-module. Suppose that $G \neq \mathbf{Coc}%
_{\mathbb{Z}-mmx}(G)$, $G$ is not finitely generated and $G/\mathbf{Coc}_{%
\mathbb{Z}-mmx}(G)$ is finitely generated. If $G/[G, G]$ is finite, then $G$
includes a normal divisible Chernikov $q$-subgroup $Q$, such that $G =
Q\langle g \rangle$ where $g$ is a $p$-element, $g^p \in \mathbf{Coc}_{%
\mathbb{Z}-mmx}(G)$ and $p$, $q$ are primes (not necessary different).
Moreover, a subgroup $Q $ is $G$-quasifinite.
\end{proposition}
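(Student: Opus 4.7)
Following the template of Proposition~\ref{P2.4}, we may assume $C_G(A) = \langle 1 \rangle$ and let $D = \mathbf{Coc}_{\mathbb{Z}-mmx}(G)$. By Lemma~\ref{L2}, $G$ is soluble, $|G/D| = p$ for a prime $p$, and there is $g \in G$ with $G = \langle g, D \rangle$ and $g^p \in D$. Put $K = [G, G] \leq D$. Since $K$ is normal of finite index in $G$, Corollary~\ref{C3.3} yields $K\langle g \rangle = G$ and that $G/K$ is a $p$-group; consequently $G/K = \langle gK \rangle$ is finite cyclic of $p$-power order $p^m$, and $g^{p^m} \in K$. Because $G$ is not finitely generated while $G/K$ is, $K$ cannot be finitely generated.

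The key rigidity observation is: whenever $M$ is a $G$-invariant subgroup of $K$ with $K/M$ finite, then $M$ has finite index in $G$, so Corollary~\ref{C3.3} applied to $M$ gives $M\langle g \rangle = G$. Intersecting with $K$, one obtains $K = M \cdot (\langle g \rangle \cap K) = M\langle g^{p^m} \rangle$, so $K/M$ is a cyclic $p$-group generated by the image of $g^{p^m}$. Thus every proper $G$-invariant subgroup of $K$ either has infinite index in $K$ or has cyclic $p$-group quotient.

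The core task is to show that $K$ is itself a divisible abelian Chernikov $q$-group for some prime $q$. The main step is showing $K$ is abelian. If $K$ were non-abelian, then $L = [K, K]$ is a proper non-trivial $G$-invariant subgroup; walking down the derived series of $K$ and using the rigidity observation at each stage, one reaches a last non-trivial abelian term which is a cyclic $p$-group, from which (combining Corollary~\ref{C6} applied to non-finitely-generated proper subgroups such as $L\langle g \rangle$ or its iterated derived analogues, and reasoning parallel to the contradiction argument at the end of the proof of Proposition~\ref{P2.4}) one derives that $K$ would be finitely generated, a contradiction. With $K$ abelian, the rigidity forces every prime $r \neq p$ to satisfy $rK = K$ (since $K/rK$ has exponent $r$ and can be a cyclic $p$-group only when trivial, for $r \neq p$); analogous arguments applied to each primary component $K_r$ and to the torsion-free part eliminate all but one primary component, leaving $K = K_q$ a periodic divisible abelian $q$-group. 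Lemma~\ref{L5} applied to the minimax $\mathbb{Z}G$-module $A/C_A(K)$ then bounds the Pr\"ufer rank of $K$, so $K$ is a divisible Chernikov $q$-group.

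Finally, the $G$-quasifiniteness of $K$ is immediate: any proper $G$-invariant $M < K$ has $K/M$ divisible (as a quotient of a divisible abelian group), so a finite $K/M$ must be trivial; hence $K/M$ is infinite, and the Chernikov structure of $K$ together with the finite action of $G/C_G(K)$ forces $M$ to be finite. Setting $Q = K$ and replacing $g$ by its $p$-part (legitimate because the $p'$-part of $g$ has finite $p'$-order and its image in the $p$-group $G/K$ is trivial, hence lies in $K$, leaving the coset $gK$ unchanged) yields $G = Q\langle g \rangle$ with all required properties. The main obstacle is the abelian step: unlike in Proposition~\ref{P2.4}, here $K\langle g \rangle = G$, so one cannot directly invoke Lemma~\ref{L8} on $K\langle g\rangle$; instead one must use subtler proper non-finitely-generated subgroups (such as $[K,K]\langle g \rangle$ or its iterates) together with Corollary~\ref{C6} and the rigidity observation to rule out $[K,K] \neq \langle 1 \rangle$.
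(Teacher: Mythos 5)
Your opening (Lemma~\ref{L2}, Corollary~\ref{C3.3}, $K=[G,G]$ not finitely generated, and the ``rigidity observation'' for $G$-invariant subgroups of finite index in $K$) matches the paper, but the heart of the proof is missing. The rigidity observation only applies to $G$-invariant $M$ with $K/M$ \emph{finite}, whereas the terms of the derived series of $K$, and the subgroups $rK$, have infinite index in $K$ in the relevant situations --- indeed $L=[K,K]$ turns out to be trivial and $K$ infinite. So ``walking down the derived series and using the rigidity observation'' yields nothing, and the claim that $K/rK$ must be a cyclic $p$-group fails because $K/rK$ need not be finite (nothing you have said excludes, say, an infinite elementary abelian $r$-quotient). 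The paper's actual route is different and essentially unavoidable: it shows $L\langle g\rangle$ is a proper, hence finitely generated, subgroup; forms the finitely generated $G$-invariant subgroup $L\langle v\rangle$ where $\langle v\rangle=\langle g\rangle\cap K$; proves $K/(L\langle v\rangle)$ is $\mathfrak{F}$-perfect and therefore has a quasicyclic quotient $K/P$; passes to $\mathbf{Core}_G(P)$ to conclude that $K/L$ is (finitely generated)-by-(Chernikov $q$-group), hence minimax, then periodic (via the $\Pi(G/M)=\{r,p\}$ contradiction with Corollary~\ref{C3.3}), hence Chernikov; and only then kills $L$ using centrality of the divisible part of $K/W$. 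Your substitute for the rank bound --- ``Lemma~\ref{L5} applied to $A/C_A(K)$ bounds the Pr\"ufer rank of $K$'' --- is a non sequitur: Lemma~\ref{L5} constrains $G/C_G(B)$ for a minimax module $B$, not the rank of the subgroup $K$ itself.

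The quasifiniteness step is also wrong as written. Finiteness of $G/C_G(K)$ does not prevent a divisible Chernikov $q$-group of rank at least $2$ from having infinite proper $G$-invariant subgroups (let $G$ act trivially and take any rank-one summand). The correct argument must return to the module: if $Q_1$ is an infinite proper $G$-invariant subgroup of $Q$, then $Q_1\langle g\rangle$ is a proper subgroup that is not finitely generated, so $A/C_A(Q_1\langle g\rangle)$ is minimax by the antifinitary hypothesis, and Corollary~\ref{C3} then forces $g\in D$, a contradiction. Finally, your ``core task'' is stronger than the statement and than what is true in general: the paper does not prove that $K$ itself is divisible --- it produces $Q$ as (the preimage of) the divisible part of $K/L$, and $K/Q$ may be a nontrivial finite cyclic group.
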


\begin{proof}
As usual we suppose that $C_G(A) = \langle 1 \rangle$. Let $D = \mathbf{Coc}_{\mathbb{Z}-mmx}(G)$. By Lemma~\ref{L2}, $G$ is soluble and $G/D$ is a group of a prime order $p$. Choose an element $g$ such that $G = \langle g, D \rangle$.  Put $K = [G, G]$. Since $G/K$  is finite, Corollary~\ref{C3.3} shows that $G = K\langle g \rangle$ and $G/K$ is a $p$-group. It follows that $K$ is not finitely generated.

Since $G$ is not finitely generated and soluble, $L = [K, K]$ is a proper subgroup of $K$. If we suppose that $\langle g, L \rangle = G$, then $G/L = \langle g \rangle L/L \cong \langle g \rangle/(\langle g \rangle \cap L)$ is abelian. It follows that $K \leq L$, and we obtain a contradiction. Thus  $\langle g, L \rangle$ is a proper subgroup of $G$. If we suppose that $G/L$ is finite, then Corollary~\ref{C3.3} shows that $G = L\langle g \rangle$. Hence  $G/L$ is infinite, i.e. $K/L$ is infinite. As we noted above, $\langle L, g \rangle$ is a proper subgroup of $G$. Since $D$ does not include         $\langle L, g \rangle$, $\langle L, g \rangle$ is finitely generated by Lemma~\ref{L8}.  Put $\langle v \rangle = \langle g \rangle \cap K$. We have
$$K \cap (L \langle g \rangle) = L(K \cap \langle g \rangle) = L \langle v \rangle.$$
Clearly $L\langle v \rangle$ is a  $G$-invariant subgroup of $K$. Furthermore, $|\langle L, g \rangle : L\langle v \rangle | \leq |G : D| = p$. It follows that $\langle L, v \rangle$ is finitely generated (see, for example,~\cite[Corollary 7.2.1]{HM1959}). If we suppose that $K/(L\langle v \rangle)$ is finitely generated, then $K$ is finitely generated, and we obtain a contradiction. This contradiction shows that $K/(L\langle v \rangle)$ is not finitely generated.

Let $Z/(L\langle v \rangle)$ be a proper $G$-invariant subgroup of $K/(L\langle v \rangle)$. We have $Z\langle g \rangle \cap K = X(\langle g \rangle \cap K) = Z\langle v \rangle = Z$. It follows that $Z\langle g \rangle$ is a proper subgroup of $G$. Since $D$ does not include $Z\langle g \rangle$, $Z\langle g \rangle$  is finitely generated by Lemma~\ref{L8}.

Assume that $K/(L\langle v \rangle)$ includes a proper subgroup $U/\langle L, v \rangle$, having finite index. Then $|G : U|$  is finite, so that  $U_1 = \mathbf{Core}_G(U)$ has finite index in $G$. By above proved $U_1\langle g \rangle$  is finitely generated. Finiteness of $| G : U_1|$ implies that $G$ is finitely generated. This contradiction shows that $K/(L\langle v \rangle)$ is $\mathfrak{F}$-perfect. Then $K/(L\langle v \rangle)$ includes a subgroup $P/(L\langle v \rangle)$ such that $K/P$ is a quasicyclic $q$-group for some prime $q$.  We remark that $K/P^x = K^x/P^x \cong K/P$, i.e. $K/P^x$ is a quasicyclic $q$-group for all $x \in G$. Finiteness of $G/K$ implies that a family  $\{ P^x \ | \ x \in G \} $ is finite. Let $\{ P^x \ | \ x \in G \} = \{ P_1 = P, P_2, \ldots , P_m \}$. By Remak's theorem we obtain the embedding
$$K/\mathbf{Core}_G(P) \hookrightarrow G/P_1 \times G/P_2 \times \ldots \times G/P_m,$$
which shows that $K/\mathbf{Core}_G(P)$ is a Chernikov $q$-group. Being $\mathfrak{F}$-perfect, it is divisible. Since $\langle L, v \rangle \leq P$ and $\langle L, v \rangle$ is $G$-invariant, $\langle L, v \rangle \leq C = \mathbf{Core}_G(P)$.  By proved above, $C$ is  finitely generated. In particular, $C/L$ is an abelian finitely generated group, so that $K/L$ is an abelian minimax group. Suppose that $\mathbf{Tor}(K/L) = T/L \neq K/L$. Put
$$\pi = \{ r \ | \   r \mbox{ is a prime such that } K/T \neq (K/T)^r \}.$$
Since $K/T$ is torsion-free and minimax, the set $\pi$  is infinite. Therefore we can choose a prime $r$ such that $r \neq p$ and $r \in \pi$. Let  $M/T = (K/T)^r$, then $K/M$ is a non-identity elementary abelian $r$-group. Clearly a subgroup $M$ is $G$-invariant. By the choice of $M$, $\Pi(G/M) = \{ r, p \}$, and we obtain a contradiction with Corollary~\ref{C3.3}. This contradiction shows that $K/L$ is periodic. In this case,  $C/L$ is finite, so that $K/L$ is Chernikov. Let $Q/L$  be a divisible part of $K/L$. An isomorphism $Q/L \cong K/C$ shows that $Q/L$ is a $q$-subgroup. Since $Q$ has finite index, then an application of Corollary~\ref{C3.3} shows that $G = Q\langle g \rangle$ and $G/Q$ is a $p$-group.

Suppose that $Q/L$ includes an infinite $G$-invariant subgroup $Q_1/L$. Suppose that $Q_1\langle g \rangle$  is finitely generated. Then           $Q_1\langle g \rangle/L = (Q_1/L)\langle gL \rangle$ is also finitely generated. A factor-group $G/L$ is periodic, in particular, $\langle gL \rangle$  is finite. It follows that $Q_1/L$ is finitely generated. On the other hand, $Q_1/L$ is an infinite Chernikov group, therefore it can not be finitely generated. This contradiction shows that $Q_1\langle g \rangle$ is not finitely generated. Then $A/C_A(Q_1\langle g \rangle)$ is artinian-by-(finite rank). Corollary~\ref{C3} shows that $g \in D$, and we obtain a contradiction. This contradiction shows that $Q/L$ is  $G$-quasifinite.

Suppose that $L \neq \langle 1 \rangle$. Then $V = [L, L] \neq L$. We have already proved that $L\langle g \rangle$ is is finitely generated. The fact that $G/L$ is periodic implies that $L$ has finite index in $L\langle g \rangle$. Then $L$ is finitely generated (see, for example,~\cite[Corollary 7.2.1]{HM1959}).  Thus $L/V$ is a finitely generated abelian group. Then $L/V$ includes a proper $G$-invariant subgroup $W/V$, having finite index in $L/V$ (this subgroup can be identity). Then $K/W$ is a Chernikov group, having finite derived subgroup. Let $Q_2/W$ be the divisible part of $K/W$, then  $Q_2/W \cong Q/L$, so that $Q_2/W$ is a divisible Chernikov $q$-subgroup. Since $[K/W, K/W]$ is finite, $Q_2/W \leq \zeta(K/W)$. Since index $| G : Q_2 |$ is finite, $G = Q_2\langle g \rangle$ by Corollary~\ref{C3.3}. Then
$$K = K \cap (Q_2\langle g \rangle) = Q_2(K \cap \langle g \rangle) =  Q_2\langle v \rangle.$$
It follows that $K/Q_2$ is cyclic. Then the inclusion $Q_2/W \leq \zeta(K/W)$ implies that $K/W$ is abelian. But in this case $L \leq W$, and we obtain a contradiction with the choice of $W$. This contradiction proves that $L$ is abelian.
\end{proof}

Recall that a group $G$ is said to have \textit{finite special rank} $%
\mathbf{r}(G) = r$ if every finitely generated subgroup of $G$ has at most $r
$ generators and there exists a finitely generated subgroup $H$ of $G$ such
that $H$ has exactly $r$ generators. We remark that every abelian minimax
group has finite special rank.

\begin{lemma}
\label{L3.6} Let $G$ be a Chernikov group and $A$ a $\mathbb{Z}G$-module. If 
$A/C_A(G)$ is minimax (as a $\mathbb{Z}$-module), then the additive group of 
$A(\omega\mathbb{Z}G)$ is a Chernikov subgroup. Moreover, $\Pi(A(\omega%
\mathbb{Z}G)) \subseteq \Pi(G)$.
\end{lemma}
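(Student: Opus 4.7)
The plan is to exploit the Chernikov structure of $G$ (a divisible abelian subgroup $D$ of finite index) together with the minimax hypothesis on $V := A/C_A(G)$, decompose $A\omega\mathbb{Z}G$ into finitely many manageable pieces, and then bound each piece. First, using the identity $a(g_i d - 1) = a g_i (d-1) + a(g_i - 1)$ and a transversal $g_1,\ldots,g_n$ of $D$ in $G$, one obtains
\[
A\omega\mathbb{Z}G \;=\; A\omega\mathbb{Z}D \;+\; \sum_{i=1}^n A(g_i - 1);
\]
an analogous argument over an ascending exhaustion $D = \bigcup_m D^{(m)}$ by finite subgroups gives $A\omega\mathbb{Z}D = \bigcup_m A\omega\mathbb{Z}D^{(m)}$, with each $A\omega\mathbb{Z}D^{(m)}$ a finite sum of subgroups $A(d-1) \cong A/C_A(d)$. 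Since $C_A(G) \leq C_A(g)$ for every $g$, each of these summands is a $\mathbb{Z}$-homomorphic image of the minimax module $V$.

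The next step is to locate the image of $A\omega\mathbb{Z}G$ inside $V$. By the minimax structure, $V$ has a filtration $0 \leq \mathbf{Tor}(V) \leq V$ with $\mathbf{Tor}(V)$ Chernikov and $V/\mathbf{Tor}(V)$ free abelian of finite rank $r$. Since $G$ is periodic, its image in $\mathrm{Aut}(V/\mathbf{Tor}(V)) \hookrightarrow \mathbf{GL}_r(\mathbb{Q})$ is a periodic subgroup of $\mathbf{GL}_r(\mathbb{Q})$, hence finite (as in the proof of Lemma~\ref{L5}). Because $D$ is divisible, it must act trivially on $V/\mathbf{Tor}(V)$, and the same holds for a finite-index subgroup of $G$. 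A routine cyclic-subgroup analysis applied to each coset representative $g_i$ puts the full image of $A\omega\mathbb{Z}G$ in $V$ inside $\mathbf{Tor}(V)$, which is Chernikov with $\Pi(\mathbf{Tor}(V)) \subseteq \Pi(G)$.

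Finally, lifting back from $V$ to $A$ reduces to bounding $A\omega\mathbb{Z}G \cap C_A(G)$. An element $a(g-1)$ lying in $C_A(G)$ satisfies $a(g-1)(h-1) = 0$ for every $h \in G$, and specializing $h = g$ gives $a(g-1)^2 = 0$; this places $a(g-1)$ inside the ``Tate-cohomology intersection'' $\ker(g-1) \cap \mathrm{im}(g-1)$ for the cyclic group $\langle g\rangle$, which a standard computation shows is annihilated by $|g|$. Thus the intersection contributes only primes in $\Pi(|g|) \subseteq \Pi(G)$ and only a uniformly bounded piece; assembling with the finite transversal contribution yields the claim. The hardest step will be the ascending-union argument: to show that $\bigcup_m A\omega\mathbb{Z}D^{(m)}$ is Chernikov (not merely minimax or locally Chernikov), one must combine the uniform rank bound coming from $\mathbf{Tor}(V)$ with the divisibility of $D$ and the per-cyclic-subgroup exponent bound, so that the divisible Chernikov part has bounded finite rank and the finite complement has uniformly bounded order simultaneously.
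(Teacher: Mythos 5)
Your decomposition of $G$ into its divisible part $D$ and a finite transversal, the ascending union over finite subgroups, and the rank bound coming from $V=A/C_A(G)$ are close in spirit to the paper's first step, which bounds $\mathbf{r}(A(\omega\mathbb{Z}L_n))\leq rk$ over an ascending chain of finite subgroups $L_n$ with a bounded number of generators. But the paper then proceeds quite differently: for the divisible part $Q$ it uses the maps $\gamma_a\colon x\mapsto a(x-1)$, which are \emph{group homomorphisms} $Q\to A$ precisely because $Q$, being $\mathfrak{F}$-perfect, centralizes both $T$ (the preimage of $\mathbf{Tor}(A/C_A(Q))$) and $A/T$; this gives $[a,Q]\cong Q/C_Q(a)$, a divisible Chernikov $\Pi(Q)$-group, so the prime restriction is extracted from the group $Q$ itself, not from the ambient minimax module.

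Two steps of your argument are genuine gaps. First, the claim that a ``routine cyclic-subgroup analysis'' forces the image of $A\omega\mathbb{Z}G$ in $V$ into $\mathbf{Tor}(V)$ is false for the finitely many coset representatives: an element $g$ of order $2$ acting as $-1$ on a torsion-free quotient $V/\mathbf{Tor}(V)\cong\mathbb{Z}$ has $V(g-1)$ of finite index and torsion-free (and this configuration is compatible with all the stated hypotheses, e.g. $G=\mathbb{Z}(3^{\infty})\times\langle g\rangle$ acting on $A=\mathbb{Z}$ with the quasicyclic factor trivial). Divisibility forces only $D$ to centralize $V/\mathbf{Tor}(V)$; the finite top of $G$ is exactly the hard part, and it is also the delicate point in the final paragraph of the paper's own proof, so it cannot be waved away. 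Second, $\Pi(\mathbf{Tor}(V))\subseteq\Pi(G)$ is unjustified: $\mathbf{Tor}(A/C_A(G))$ is merely the torsion part of a minimax $\mathbb{Z}$-module and can involve primes outside $\Pi(G)$ (consider an involution acting as $-1$ on $\mathbb{Z}/9\mathbb{Z}$). The prime restriction must come from the group, as in $[a,Q]\cong Q/C_Q(a)$ or from norm elements of finite subgroups. Your Tate-cohomology computation for $A\omega\mathbb{Z}G\cap C_A(G)$ is also stated only for elements of the special form $a(g-1)$, whereas a general element is a sum $\sum_i a_i(g_i-1)$; that particular defect is repairable by applying the full norm $\sum_{h\in H}h$ of a finite subgroup $H$ containing all the $g_i$, but the first two gaps are not repairable by the means you indicate.
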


\begin{proof}
For each element $x$ of $G$ consider the mapping $\delta_x: A \mapsto A$, defined by the rule $\delta_x(a) = a(x-1)$, $a \in A$. Clearly this mapping is a $\mathbb{Z}$-endomorphism of $A$, $\mathbf{Ker}(\delta_x) = C_A(x)$ and $\mathbf{Im}(\delta_x) = A(\omega\mathbb{Z}\langle x \rangle) = A(x-1)$.  Hence $A(x-1) = \mathbf{Im}(\delta_x) \cong_\mathbb{Z} A/\mathbf{Ker}(\delta_x) = A/C_A(x)$. Being minimax, $A/C_A(G)$ has finite special rank $r$. The inclusion $C_A(G) \leq C_A(x)$ follows that $A/C_A(x)$ has a special rank at most $r$. Then and $\mathbf{r}(A(x-1))\leq r$. Let $k$ be a positive integer such that  $|\Omega_1(G) | = q^k$. Then $G$ has an ascending series of finite subgroups
$$L_1 = \Omega_1(G) \leq L_2 \leq \ldots \leq L_n \leq L_{n + 1} \leq \ldots$$
such that $L_n = \mathbf{Dr}_{1 \leq j \leq k} \langle x_{n_j} \rangle$  where $|x_{n_j}| \leq q^k$ for each $j$, and $G = \bigcup_{n \in \mathbb{N}} L_n$. The equation
$$A(\omega\mathbb{Z}L_n) = A(\omega\mathbb{Z}\langle x_{n_1} \rangle) + \ldots + A(\omega\mathbb{Z}\langle x_{n_k} \rangle) =  A(x_{n_1}-1) + \ldots + A(x_{n_k}-1)$$
together with $\mathbf{r}(A(x_{n_j}-1)) \leq r$, $1 \leq j \leq k$, shows that $\mathbf{r}(A(\omega\mathbb{Z}L_n)) \leq rk$, $n \in \mathbb{N}$. Since $G = \bigcup_{n \in \mathbb{N}} L_n$, $A(\omega\mathbb{Z}G) = \bigcup_{n \in \mathbb{N}} A(\omega\mathbb{Z}L_n)$. Moreover  $L_n \leq L_{n + 1}$ implies that $A(\omega\mathbb{Z}L_n) \leq A(\omega\mathbb{Z}L_{n + 1})$. Let $B$  be an arbitrary finitely generated subgroup of  $A(\omega\mathbb{Z}G)$. Then there exists a positive integer $m$ such that $B \leq A(\omega\mathbb{Z}L_m)$. By proved above $B$ has at most $rk$ generators. It follows that $A(\omega\mathbb{Z}G)$ has a finite special rank at most $rk$.

Let $Q$ be a divisible part of $G$. Since $A/C_A(Q)$ is minimax, $A$ has a series of $\mathbb{Z}G$-submodules $C_G(Q) = C \leq T \leq A$ where $T/C = \mathbf{Tor}(A/C)$ is a Chernikov group and $A/T$ is torsion-free and has finite $\mathbb{Z}$-rank. Repeating the final part of the proof of Lemma~\ref{L5}, we obtain that $Q = C_Q(T)$  and  $Q = C_Q(A/T)$.

Let $a$ be an arbitrary element of $T$. Consider the mapping $\gamma_a: Q \mapsto A(\omega\mathbf{Z}Q)$, defined by the rule $\gamma_a(x) = a(x-1)$. By $(x-1)(y-1) = (xy-1)-(x-1)-(y-1)$. We have $a(xy-1) = a(x-1) + a(y-1) + a(x-1)(y-1) = a(x-1) + a(y-1)$. The equation $Q = C_Q(T)$ implies that  $a(x-1)(y-1) = 0$. In other words, $\gamma_a(xy) = \gamma_a(x) + \gamma_a(y)$, thus $\gamma_a$  is a homomorphism of $G$  in $A(\omega\mathbb{Z}Q)$. Furthermore, $\mathbf{Ker}(\gamma_a) = C_G(a)$  and $\mathbf{Im}(\gamma_a) = \langle a \rangle(\omega\mathbb{Z}Q) = [a, Q]$, so that $[a, Q] \cong Q/C_Q(a)$. It follows that if $[a, Q] \neq \langle 0 \rangle$, then it is a divisible Chernikov subgroup and  $\Pi([a, Q]) \subseteq \Pi(Q)$. Since it is valid for every $a \in T$, $T(\omega\mathbb{Z}Q)$ is a divisible subgroup (if it is non-zero) and $\Pi(T(\omega\mathbb{Z}Q)) \subseteq \Pi(Q)$. By proved above, $T(\omega\mathbb{Z}Q)$ has finite special rank, and therefore $T(\omega\mathbb{Z}Q)$ is a Chernikov subgroup.

Consider now the factor-module $A/V$ where $V = T(\omega\mathbb{Z}Q)$. Then the inclusion  $T/V \leq C_{A/V}(Q)$  implies  that $(A/V)(\omega\mathbb{Z}Q) \leq T/V$. Using the above arguments, we obtain that $ (A/V)(\omega\mathbb{Z}Q)$ is a Chernikov divisible group such that  $\Pi((A/V)(\omega\mathbb{Z}Q)) \subseteq \Pi(Q)$. We have
$$(A/V)(\omega\mathbb{Z}Q)  = (A(\omega\mathbb{Z}Q) + V)/V = (A(\omega\mathbb{Z}Q) + T(\omega\mathbb{Z}Q))/(T(\omega\mathbb{Z}Q),$$
which follows that $A(\omega\mathbb{Z}Q)$ is a Chernikov divisible  subgroup such that $\Pi(A(\omega\mathbb{Z}Q)) \subseteq \Pi(Q)$.

Let $M = A(\omega\mathbb{Z}Q)$, then $Q \leq C_G(A/M)$, in particular, $G/C_G(A/M)$ is finite. By proved above $(A/M)(\omega\mathbb{Z}G)$ has finite special rank. Using the above arguments, we obtain that $\langle a + M \rangle(\omega\mathbb{Z}G)$ is a finite group and                   $\Pi(\langle a+M \rangle(\omega\mathbb{Z}G)) \subseteq \Pi(G)$  for every element $a \in A$. The finiteness of $\Pi(G)$ implies that $(A/M)(\omega\mathbb{Z}G)$ is a Chernikov subgroup and $\Pi((A/M)(\omega\mathbb{Z}G)) \subseteq \Pi(G)$. Hence $A(\omega\mathbb{Z}G)$ is Chernikov and $ \Pi(A(\omega\mathbb{Z}G)) \subseteq  \Pi(G)$.

\end{proof}

\section{Proofs of the main Theorem.}

\begin{proof}

If $G/D$ is not finitely generated, then Proposition~\ref{P2.1} shows that $G$ is a group of type (1).

Suppose now that $G/D$ is finitely generated. Then Lemma~\ref{L2}  proves that $G$  is soluble and $G/D$ is a group of a prime order $p$. If we assume that  $G/[G, G]$ is infinite, then Proposition~\ref{P2.4} shows that $G$ is a group of type (2).

Finally suppose that $G/[G, G]$ is finite. Then Proposition~\ref{P2.5} shows that  G  includes a normal divisible Chernikov $q$-subgroup $Q$, such that $G = Q\langle g \rangle$ where $g$ is  a $p$-element, $p$, $q$ are primes (not necessary different). Moreover, $g^p \in D$  and $Q$ is $G$-quasifinite. Finally, the assertion~\ref{3c} follows from the results of Section 3 of the paper~\cite{ZD1974}, and the assertion~\ref{3d}  follows from Theorem 3.4 of the paper~\cite{HB1977}.

Let $G$ be a group of the type (2) or (3). Then $D = Q\langle g^p \rangle$  is a proper Chernikov subgroup of $G$, and hence it is not finitely generated. Then $A/C_A(D)$ is minimax and we can use Lemma~\ref{L3.6}.

\end{proof}

We will construct the following example showing that situations described in
the main theorem are real.

In the case (1) $G$ is a quasicyclic $q$-group and $\mathbf{Coc}_{\mathbb{Z}%
-mmx} (G) = \langle 1 \rangle$. Let $p$ be a prime such that $p \neq q$ and $%
A$ be a simple $\mathbf{F_p}G$-module. The method of constructing such a
module is specified, for example, in Chapter 2 of the book~\cite{KOS2002}.
We can consider $A$ as a $\mathbb{Z}G$-module. For this module we have $%
C_A(g) = \langle 0 \rangle$ for each element $g$ of $G$. It follows that $%
\mathbf{Coc}_{\mathbb{Z}-mmx} (G) = \langle 1 \rangle$.

Let $A = \langle a \rangle$ be an additively written infinite cyclic group
and $B = \langle b_n \ | \ n \in \mathbb{N} \rangle$ additively written
quasicyclic $2$-group, that is $2b_1 = 0$, $2b_2 = b_1$, \ldots, $2b_{n + 1}
= b_n$, $n \in \mathbb{N}$. Put $D = A \oplus B$.

Let $\gamma_k$ be an automorphism of $D$, satisfying the following
conditions:

\begin{equation*}
\gamma_k(a) = a + b_k, \ \gamma_k(b_n) = b_n \mbox{ for all } n \in \mathbb{N%
}.
\end{equation*}

Then $\gamma_1^2 = \varepsilon$ is an identity automorphism of $D$, $%
\gamma_2^2 = \gamma_1$ and $\gamma_{k + 1}^2 = \gamma_k$, $k \in \mathbb{N}$%
. In other words, $\Gamma = \langle \gamma_k \ | \ k \in \mathbb{N} \rangle$
is a quasicyclic $2$-group. In a natural way, $D$ becomes a $\mathbb{Z}\Gamma
$-module. Furthermore, $C_D(\gamma_k) = C_D(\Gamma) = B$, in particular, $%
D/C_D(\Gamma)$ is an infinite cyclic group.

Let $\langle g \rangle$ be a cyclic group of a prime order $p$, $p \neq 2$.
For every prime $q \notin \{2, p\}$ there exists a finite simple $\mathbf{F_q%
}\langle g \rangle$-module $U$. Put $V =\bigoplus_{j \in \mathbb{N}} V_j$,
where $V_j$ is an $\mathbf{F_q}\langle g \rangle$-isomorphic copy of $U$, $j
\in \mathbb{N}$. Then $V$ is an infinite $\mathbf{F_q}\langle g \rangle$%
-module such that $C_V(g) = \langle 0 \rangle$. Again we can consider $V$ as 
$\mathbb{Z}\langle g \rangle$-module. Furthermore, put $E = D \oplus V$. We
can define an action of $G = \Gamma \times \langle g \rangle$ on $E$ such
that $\Gamma$ acts trivially on $V$ and $g$ acts trivially on $D$. Thus $E$
becomes a $\mathbb{Z}G$-module. Furthermore, $E/C_E(\Gamma) = E/(B \oplus V)$
is an infinite cyclic group and $E/C_E(g) = E/D$ is an infinite elementary
abelian $q$-subgroup. Hence $\mathbf{Coc}_{\mathbb{Z}-mmx} (G) = \Gamma$,
and we obtain the situation described in (2).

Consider again the above constructed $\mathbb{Z}\Gamma$-module $D$. Let $T$
be an infinite elementary abelian $3$-group. We can extend an action of on $%
Y = D \oplus T$ if put $\gamma_k(c) = c$ for each element $c \in T$ and $k
\in \mathbb{N}$. Define an automorphism $\delta$ of $Y$ by the rule $%
\delta(a) = a$, $\delta(c) =-c$ for each element $c \in B \oplus T$. Then $%
\delta^2 = \varepsilon$. Further, we have 
\begin{equation*}
(\delta \circ \gamma_k \circ \delta)(a) = \delta(\gamma_k(\delta(a)) =
\delta(\gamma_k(a)) = \delta(a + b_k) = a - b_k = \gamma_k^{-1}(a),
\end{equation*}
\begin{equation*}
(\delta \circ \gamma_k \circ \delta)(c) = \delta(\gamma_k(\delta(c)) =
\delta(\gamma_k(-c)) = \delta(-c) = c = \gamma_k^{-1}(c), \mbox{ whenever }
c \in B \oplus T.
\end{equation*}
It follows that $\delta \circ \gamma_k \circ \delta = \gamma_k^{-1}$, $k \in 
\mathbb{N}$. In particular, $\Gamma$ is $\langle \delta \rangle$-invariant
and $G = \Gamma\langle \delta \rangle$ is an infinite dihedral $2$-group.
Now we can consider $Y$ as $\mathbb{Z}G$-module. For this module we have $%
Y/C_Y(\Gamma) = Y/(B \oplus T)$ is an infinite cyclic group and $%
Y/C_Y(\delta) = E/D$ is an infinite elementary abelian $3$-subgroup. Again $%
\Gamma = \mathbf{Coc}_{\mathbb{Z}-mmx} (G)$, and we obtain the situation
described in (3).

\end{document}